\documentclass[final,11pt]{siamart171218}
\usepackage{amsmath}
\usepackage{amssymb}
\usepackage{geometry}
\usepackage{graphicx}
\usepackage{float}
\usepackage{siunitx}
\usepackage{tikz, pgfplots, adjustbox}
\usetikzlibrary{intersections}
\usetikzlibrary{calc}
\usepackage{algorithm}
\usepackage{algpseudocode}
\usepackage{tikz}
\usepackage{cancel}
\usepackage{cleveref}
\usepackage[most]{tcolorbox}
\usepackage[sort,nocompress]{cite}
\newtheorem{assumption}[theorem]{Assumption}

\usepackage{hyperref}
\usepackage{bookmark}

\newcommand{\eps}{\varepsilon}

\newcommand{\bs}[1]{{\boldsymbol #1}}

\colorlet{C1}{green!30!yellow}
\colorlet{C2}{green!90!black}
\colorlet{C3}{green!60!black}

\graphicspath{{./figures/}}

\newcommand{\R}[0]{\mathbb{R}}
\newcommand{\Vr}{\bs V_{\!\!r}}
\newcommand{\Vrp}{\bs V_{\!\!r^+}}
\newcommand{\Vhatr}{\hat{\bs V}\mathstrut_{\!\!r}}
\newcommand{\Vhatrp}{\hat{\bs V}\mathstrut_{\!\!r^+}}
\newcommand{\Wr}{\bs W_{\!\!r}}

\title{Amortized low-rank approximation for hyperparameter marginalization in PDE-governed Bayesian inverse problems\thanks{Version from \today.}
\funding{Partially supported by the Multidisciplinary University Research Initiatives (MURI) Program Office of Naval Research (ONR) grant \#N00014-19-1-242, by ONR \#N00014-26-1-2101, by the US National Science Foundation (NSF) under \#2411229, and by the U.S.\ Department of Energy, Office of Science, Office of ASCR, DOE Computational Science Graduate Fellowship under \#DE-SC0022158.
}}

\author{Sonia Reilly\thanks{Courant Institute School of Mathematics, Computing and Data Science, New York University, New York, USA, \email{sonia.reilly@nyu.edu, stadler@cims.nyu.edu}}
 \and Georg Stadler\footnotemark[2]
}
\headers{Amortized hyperparameter marginalization}{Sonia Reilly, Georg Stadler}

\pgfplotsset{compat=1.18}
\usepackage{url}
\begin{document}
\maketitle
\begin{abstract}
This paper addresses the efficient solution of hierarchical Bayesian inverse problems with a high- or infinite-dimensional parameter field and a moderate number of hyperparameters. We focus on a class of problems in which the parameter-to-observable mapping is a linear PDE, so that, for fixed hyperparameters, the problem becomes conditionally Gaussian. Marginalizing the hyperparameters entails repeated evaluation of their marginal density, which in turn entails repeated large-scale log-determinant ratio computations and maximum a posteriori (MAP) estimations. To address these computational challenges, we introduce a scalable framework that relies on generalized low-rank approximations of the update from the prior to the posterior precision matrix. Though such a framework is state-of-the-art in non-hierarchical settings, in the case of nonlinear prior hyperparameters such as covariance length scales, a direct application to hierarchical problems is inefficient. We propose two amortized variants of this framework, comparing their theoretical properties and computational complexity to the direct method. In numerical experiments, we evaluate their performance on an advection–diffusion initial condition inverse problem in two and three spatial dimensions, with hyperparameters in both the prior and the noise covariance. We find that our amortized methods achieve a factor of 30--45 speedup for 100 marginal density evaluations relative to the direct method in the 3D problem.
\end{abstract}

\begin{keywords}
    hierarchical Bayesian inference, PDE-governed inverse problems, hyperparameter marginalization, low-rank approximation, discretization-invariant.
\end{keywords}

\begin{AMS}
    65M32, 62F15, 35R30, 35Q62, 65F30.
\end{AMS}

\section{Introduction} In Bayesian inverse problems where the forward operator is costly to compute, often because it requires solving a PDE, one usually distinguishes between linear Gaussian settings and nonlinear, potentially non-Gaussian ones. In the linear Gaussian case, the posterior distribution is available in closed form and can be accurately approximated even when the parameter dimension is very large or infinite. The nonlinear case typically necessitates the use of MCMC or importance sampling, often combined with sophisticated proposal or dimension reduction techniques.
An important class of problems involving hyperparameters lies between these two extremes. While considering the full set of parameters leads to a nonlinear problem, there is frequently an underlying linear structure: for fixed hyperparameters, the conditional distributions have a linear Gaussian form. If the hyperparameter dimension is moderate, this structure can be used to devise approximation methods that do not require sampling, or only require sampling in the space of hyperparameters. We now formalize the class of problems we consider.

\subsection{Problem statement}\label{sec:problem}
We define a map $A:X\rightarrow \mathbb R^q$ with $X$ a separable Hilbert space and $q\ge 1$. We consider the problem of inferring the parameter $m \in X$ and a vector of hyperparameters $\bs\theta \in \mathbb R^k$ from observations $\bs y\in \mathbb R^q$ using the linear relation
\begin{equation}\label{eq:IP}
    \bs y = Am + \bs \varepsilon,
\end{equation}
where $\bs\varepsilon$ is noise that corrupts the observations, and bold symbols represent vectors and matrices to differentiate them from functions and operators. Adopting a hierarchical Bayesian approach, we consider $m,\bs \theta,\bs \varepsilon$ to be random variables with known prior distributions. We assume Gaussian noise $\bs \varepsilon\sim\mathcal N(\bs 0,\bs Q_{\bs\varepsilon}^{-1}(\bs\theta))$ and a Gaussian prior $m\sim\mathcal N(\mu_{\text{pr}}(\bs \theta),Q_\text{pr}^{-1}(\bs\theta))$ with $\mu_\text{pr}(\bs\theta)\in X$  and trace-class covariance operator $Q_\text{pr}^{-1}(\bs\theta)$ for all $\bs\theta$.

Writing the distributions of $m, \bs \theta$, and $\bs y$ respectively as $\mathbb{P}(dm), \mathbb{P}(\bs \theta)$, and $\mathbb{P}(\bs y)$, we characterize $\bs \theta$ using the hyperparameter marginal given by Bayes' Law,
\begin{equation} \label{eq:hierarchical_Bayes}
    \mathbb{P}(\bs \theta|\bs y) \propto \frac{\mathbb{P}(dm, \bs \theta, \bs y)}{\mathbb{P}(dm|\bs \theta, \bs y)}
    = \frac{\mathbb{P}(\bs y|dm, \bs \theta) \mathbb{P}(dm|\bs \theta) \mathbb{P}(\bs \theta)}{\mathbb{P}(dm|\bs \theta, \bs y)},
\end{equation}
where we write $dm$ rather than $m$ because $X$ can be infinite dimensional and its density may not exist.
To characterize $m$, we compute its marginal distribution
given by the integral
\begin{equation} \label{eq:param_marginal}
    \mathbb{P}(dm|\bs y)
    = \int \mathbb{P}(dm| \bs \theta, \bs y) \mathbb{P}(\bs \theta| \bs y) \, d \bs \theta.
\end{equation}
Because the conditional posterior of $m$ given $\bs\theta$ is Gaussian, this integral can be approximated by a Gaussian mixture using quadrature or sampling over the low-dimensional $\bs\theta$. 
An alternative approach \cite{chung_efficient_2024, hall-hooper_efficient_2024} is the empirical Bayes approximation, which finds the $\bs \theta^\ast$ that maximizes the hyperparameter marginal and approximates the marginal of $m$ by conditioning on $\bs \theta^\ast$. 

Both approaches hinge on being able to efficiently evaluate and explore $\mathbb{P}(\bs\theta|\bs y)$, as well as to repeatedly carry out computations involving the posterior $\mathbb{P}(dm|\bs\theta,\bs y)$. Developing efficient methods to accomplish this for PDE-governed inverse problems is the central objective of this paper.

\subsection{Related literature}
Hierarchical Bayesian inverse problems have garnered considerable attention in statistics and uncertainty quantification in recent years. Drawing samples from the full joint posterior quickly becomes impractical as the dimension increases, so an alternative is to integrate out the high-dimensional latent field. This usually relies on the same assumption as in this work: conditional on the hyperparameters, the latent variable is (approximately) Gaussian.
For example, the MCMC approaches in \cite{saibaba_efficient_2019, brown_low-rank_2018, buser_efficient_2025} construct Gibbs-type samplers that exploit marginalization to make sampling computationally practical. Specifically, in \cite{saibaba_efficient_2019, brown_low-rank_2018} the authors employ the same low-rank approximation of the precision update as we do; however, they make the assumption that the hyperparameters appear only linearly in the prior, which enables precomputation of the low-rank approximation of the prior-preconditioned precision update. In \cite{buser_efficient_2025}, the authors assume that the precision update is not sufficiently low-rank to be accurately approximated. Instead, they employ a Golub–Kahan bidiagonalization scheme to derive an alternative approximation of the marginal and also introduce a method for computing gradients of this marginal.

Other studies, including \cite{chung_efficient_2024, hall-hooper_efficient_2024}, forgo sampling over the hyperparameter space altogether and instead use an empirical Bayes strategy: they maximize the hyperparameter marginal and then condition on the maximizing hyperparameter. The authors of \cite{hall-hooper_efficient_2024} again employ a Golub-Kahan bidiagonalization, but they recompute the approximation for every new choice of $\bs \theta$. By contrast, \cite{chung_efficient_2024} focuses on the efficient evaluation of the hyperparameter marginal, relying on Monte Carlo estimation to approximate the log-determinant term in the marginal density. Both works also develop methods for estimating the gradient of the hyperparameter marginal, enabling gradient-based empirical Bayes optimization. Related analytic work investigates the consistency of MAP estimation for hierarchical problems in infinite-dimensional settings  \cite{DunlopHelinStuart20}.

A broader class of hierarchical Bayesian inverse problems is addressed in \cite{NortonChristenFox18, fox_fast_2016}, where the authors introduce marginal-then-conditional sampling as an alternative to standard Gibbs sampling. Their framework does not impose a linear Gaussian structure on the inverse problem for fixed hyperparameters, allowing for any conjugate priors and likelihoods. However, it does make the assumption that the hyperparameter marginal is fast to sample, and the focus is on problems with simple forward operators. This type of sampling could perhaps be combined with the methods we present here to extend it to PDE-governed problems.

Efficient marginalization over hyperparameters, and in particular the evaluation of determinant ratios, is also an important task in Gaussian process regression \cite{DongEriksson,RasmussenWilliams}. However, GP regression operates in a rather different cost regime, where the forward operators allow for fast evaluation.
Similarly, Integrated Nested Laplace Approximation (INLA) \cite{rue2009approximate,RueRieblerSorbyeEtAl17, Opitz17}, a widely used method for hierarchical problems in spatial statistics, addresses problems similar to ours but relies on sparsity that PDE forward operators lack. As INLA is less well known in the PDE inverse problem community, we present a more detailed discussion and comparison with INLA in \cref{sec:relation_INLA}. 

Finally, our method builds heavily on the literature developing
low-rank approximations of the precision update as an approach to
efficient solution of Bayesian inverse problems, in particular
\cite{Bui-ThanhGhattasMartinEtAl13, spantini_optimal_2015,
  CarereLie25, CuiMartinMarzouk14, AlexanderianSaibaba}. Our approach
extends prior-preconditioned precision updates
\cite{Bui-ThanhGhattasMartinEtAl13, spantini_optimal_2015,
  CarereLie25} which are derived through solving generalized
eigenvalue problems.
Amortization strategies similar to what we propose also appear in the
literature on optimal experimental design (OED) for PDE-governed
Bayesian inverse problems, where the design criterion is typically the
trace or log-determinant of the posterior covariance matrix, which
must be evaluated repeatedly as the design varies
\cite{AlexanderianSaibaba, AlePetStaGha14}.
A main difference is that in OED, the prior and forward operators remain fixed, and only the design, implemented through observation weighting, changes.

\subsection{Contributions and limitations}
We propose a framework for fast marginalization over the hyperparameters $\bs \theta$ based on low–rank approximation techniques for Bayesian inverse problems \cite{Bui-ThanhGhattasMartinEtAl13, spantini_optimal_2015, CarereLie25}. Such techniques typically depend on approximating a prior-preconditioned precision update, which complicates their application to evaluating $\pi(\bs \theta|\bs y)$ when the prior depends on $\bs \theta$ (see \cref{sec:pi_theta_and_PP}). We show in \cref{sec:WP_and_UP} that the prior can be substituted with a $\bs \theta$-independent preconditioner to enable precomputation with the parameter-to-observable map, thereby amortizing the approximation cost over many evaluations of $\pi(\bs \theta|\bs y)$. This makes it well-suited for problems with hyperparameters that enter nonlinearly in the prior, such as correlation lengths. 
We establish theoretical guarantees by upper-bounding the truncation error of the proposed methods, thus indicating a principled choice of truncation rank, and set out the conditions that alternative preconditioners must satisfy. We demonstrate the efficiency of this approach compared to direct approximation of the prior-preconditioned update.
Our framework to approximate posteriors in Bayesian problems with hyperparameters differs from existing methods in that it does not rely on MCMC or other sampling.
Because it is fully deterministic, it also enables a quadrature-based improvement over empirical Bayes estimation. It is designed for computationally demanding PDE-governed problems, and in \cref{sec:adv_diff} we illustrate its effectiveness on a three-dimensional, time-dependent inverse problem, for which we achieve a 30--45$\times$ speedup relative to the prior-preconditioned method.

In developing our framework, we make assumptions that impose some limitations. In particular, the current framework targets linear forward operators, so that the conditional posterior is Gaussian. To avoid recomputing the precision update for each hyperparameter marginal evaluation or relying on nonlinear operator surrogates, we assume that both the forward operator and the noise precision $\bs Q_{\bs \varepsilon}$ depend on $\bs \theta$ multiplicatively or not at all. Finally, although our methods can be applied to problems where $\bs Q_\text{pr}$ depends on $\bs \theta$ solely in a multiplicative way, this is not the target setting. Our proposed algorithms do not provide computational gains in this simpler case, since the prior-preconditioned update can be approximated in advance. 


\section{Preliminaries} \label{sec:prelim}
We begin by discussing PDE-governed infinite-dimensional Bayesian inverse problems and their discretization, as well as introducing relevant computational tools. We underscore key assumptions about the target problem and compare it to problems that can be addressed with other methods.

\subsection{Discretization of infinite-dimensional problems}
The problem described in \cref{sec:problem} is a PDE-governed inverse problem if the application of $A$ requires a PDE solution. In this context, the parameter $m$ typically represents a function, such as a spatially-varying coefficient, initial condition, or boundary condition. Discretizing this function yields a parameter vector $\bs m$ whose dimension can be very large, posing a significant computational challenge.

For concreteness, let $\bs m$ be the vector of finite element coefficients of a function $m(x)$. To ensure discretization-invariance, we let $\bs m \in \R^n_{\bs M}$, the space $\R^n$ endowed with the inner product $\langle\cdot\,, \cdot \rangle_{\bs M}$ weighted by the finite element mass matrix $\bs M$ to approximate the $L^2$ inner product. To distinguish adjoints in this weighted space from transposes in standard $\R^n$, we follow the notation of \cite{stuart_inverse_2010} and denote the adjoints of $\bs F: \R^n_{\bs M} \to \R^q$ and $\bs V: \R^q \to \R^n_{\bs M}$ as
\begin{equation}\label{eq:adjoint}
\begin{aligned} 
    \bs F^\sharp &:= \bs M^{-1} \bs F^\top, \\
    \bs V^\diamond &:= \bs V ^\top \bs M.
\end{aligned}
\end{equation}
Note also that in $\R^n_{\bs M}$, precision operators are matrices that are self-adjoint and positive definite with respect to the $\bs M$-weighted inner product. In particular, the discretization $\bs Q$ of a precision operator $Q$ can be written as $\bs Q = \bs M^{-1} \bs B$ for some symmetric positive definite $\bs B \in \R^{n \times n}$. Then, self-adjointness of $\bs Q$ follows from
\begin{equation*}
    \langle \bs x, \bs Q \bs y \rangle_{\bs M} 
    = \bs x^\top \left( \bs M \bs M^{-1} \bs B \bs y \right) 
    = \bs x^\top \bs B \bs y 
    = \big(\bs M^{-1} \bs B \bs x \big)^\top \bs M \bs y 
    = \langle \bs Q \bs x,  \bs y \rangle_{\bs M}.
\end{equation*}
We adopt a discretization-invariant formulation in which operators are self-adjoint in the $\bs M$-weighted inner product, and the prior covariances are trace-class. The importance of posing hierarchical inverse problems so that hyperparameters remain interpretable in the continuum limit, and of understanding algorithmic behavior under mesh refinement, was established by \cite{Agapiou2014}, who showed that standard
Gibbs samplers for such problems can degrade as the discretization is refined.

\subsection{Linear Gaussian Bayesian inverse problems}
\label{sec:lineargaussiancase}
Upon discretization, \eqref{eq:IP} becomes
\begin{equation*}
    \bs y = \bs A \bs m + \bs \varepsilon,
\end{equation*}
where $\bs A \in \R^{m \times n}$ is the forward operator and $\bs m \in \R^n$ are inversion parameters. For fixed hyperparameter $\bs \theta$, we obtain a Gaussian likelihood $\pi(\bs y | \bs m, \bs \theta)$ given by $\mathcal{N}(\bs A \bs m, \bs Q_{\bs \varepsilon}^{-1}(\bs \theta))$, a Gaussian prior distribution $\pi(\bs m|\bs \theta)$ given by $\mathcal{N}(\bs \mu_\text{pr}(\bs \theta), \bs Q_\text{pr}^{-1}(\bs \theta))$, and a linear forward map. Together, these make $\pi(\bs m | \bs \theta, \bs y)$ a linear Gaussian Bayesian inverse problem. 
By combining these known distributions according to Bayes' law, the posterior distribution is, up to a multiplicative constant, given by
\begin{equation*}
    \pi(\bs m|\bs \theta, \bs y) \propto \pi(\bs y | \bs m, \bs \theta) \pi(\bs m|\bs \theta).
\end{equation*}
In the linear Gaussian case, the posterior is a Gaussian distribution $\mathcal{N}(\bs \mu_\text{post}(\bs \theta), \bs Q_\text{post}^{-1}(\bs \theta))$ with well-known expressions for the precision and mean \cite{Bui-ThanhGhattasMartinEtAl13},
\begin{align}
\label{eq:post-precision}
    \bs Q_\text{post} &= \bs Q_\text{pr} + \bs A^\sharp \bs Q_{\bs \varepsilon} \bs A   \\
    \bs \mu_\text{post} &= \bs Q_\text{post}^{-1} (\bs Q_\text{pr} \bs \mu_\text{pr} + \bs A^\sharp \bs Q_{\bs\varepsilon} \bs y),
    \label{eq:post-mean}
\end{align}
where we have dropped the $\bs \theta$ dependence for clarity, as we will from here on. 
The term $\bs A^\sharp \bs Q_{\bs \varepsilon} \bs A$ will be of special interest since it represents the information gain between the prior and posterior due to the data. This term is sometimes called the ``misfit Hessian''. It is often concentrated onto a low dimensional subspace and therefore low rank \cite{Bui-ThanhGhattasMartinEtAl13}; we refer to as the \emph{precision update}.

\subsection{Dominating operations in Bayesian inference governed by PDEs}
\label{sec:complexity}
Next, we review a few of the computational considerations of typical linear PDE-governed Bayesian inverse problems, adopting a 
framework like that of \cite{Bui-ThanhGhattasMartinEtAl13}. In particular, we make the following assumption for the problem with fixed hyperparameters $\bs\theta$.
\begin{assumption} \label{ass:complexity}
We do not have access to $\bs A, \bs A^\sharp, \bs Q_\text{pr}$, or $\bs Q_\text{pr}^{-1}$ as matrices, but we can apply them to vectors.
\begin{itemize}
    \item The complexity of applications of $\bs A$ and $\bs A^\sharp$ grows as $O(n^{\alpha})$ for $\alpha\ge 1$; if the scaling is linear ($\alpha=1$), the leading constant is large.
    \item The complexity of applications of $\bs Q_\text{pr}$ and $\bs Q_\text{pr}^{-1}$ grows as $O(n)$.  If $\alpha=1$, the leading constant of prior covariance operations is substantially smaller than of application of $A$.
\end{itemize}
\end{assumption}

The parameter-to-observable map $\bs A$ represents a linear or linearized steady-state or time-dependent PDE solve. For example, in the numerical experiments discussed in \cref{sec:adv_diff}, the application of $\bs A$ (and also its adjoint) requires solving a time-dependent advection-diffusion equation using implicit time stepping. As another example, in \cite{Bui-ThanhGhattasMartinEtAl13}, the application of $\bs A$ (and its adjoint) amounted to time-stepping a linear coupled acoustic-elastic wave equation.

Typical choices for Gaussian priors in infinite-dimensional Bayesian inverse problems are based on
Mat{\'e}rn kernels due to their relation to stochastic elliptic PDEs.
In particular, in \cite{lindgren_explicit_2011} the authors show that certain Mat{\'e}rn kernels correspond to precision operators that take the form of Laplacian-like PDEs with integer powers. Upon discretization, these PDEs result in sparse matrix systems. Due to the sparsity, the application of these matrices scales linearly with $n$. Moreover, application of the covariance amounts to solving systems with elliptic PDEs, for which fast PDE solvers (such as multigrid) are available. These solves typically have an $\mathcal O(n)$ time complexity, with low constants.

\subsection{Generalized eigenvalue problems and low-rank approximation}
Since the matrix $\bs A$ is only available through matrix-vector multiplications, we approximate the posterior precision via low rank approximations of the prior precision update, computed using only matrix-vector products \cite{Bui-ThanhGhattasMartinEtAl13}; see \cref{sec:prior-precon-update} for more on this. In particular, \cite{Bui-ThanhGhattasMartinEtAl13, spantini_optimal_2015} suggest approximating a prior preconditioned form of the precision update, denoted as 
\begin{equation*}
    \bs Q_\text{pr}^{-1/2} \bs A^\sharp \bs Q_{\bs \varepsilon} \bs A \bs Q_\text{pr}^{-1/2}.
\end{equation*}
Given the ability to compute the square root of the prior covariance, this could be achieved using a randomized SVD method \cite{HalkoMartinssonTropp11}. Instead, to avoid evaluation of the square root prior covariance, we follow \cite{villa_hippylib_2018} and use the generalized eigenvalue problem method described in \cite{saibaba2016randomized}.

A symmetric generalized eigenvalue problem is a problem of the form
\begin{equation*}
    \bs H \bs X = \bs \Lambda \bs B \bs X,
\end{equation*}
where $\bs H$ is symmetric and $\bs B$ is symmetric positive definite.
Letting $\bs B = \bs L\bs L^\top$ be the Cholesky decomposition of $\bs B$ and $\bs Y = \bs L^\top \bs X$, the problem can be rewritten as
\begin{equation*}
    \bs L^{-1} \bs H \bs L^{-\top} \bs Y = \bs \Lambda \bs Y
\end{equation*}
by multiplying both sides by $\bs L^{-1}$. As the eigenvectors of a symmetric matrix, the columns of $\bs Y$ are orthonormal; it follows that the generalized eigenvectors $\bs X$ are orthonormal in the $\bs B$-weighted inner product, i.e., $\bs X^\top \bs B \bs X = \bs I$.
The authors of \cite{saibaba2016randomized} propose a randomized single-pass algorithm for such generalized symmetric eigenvalue problems. To compute the largest $r$ generalized eigenvalues and corresponding eigenvectors, they compute the product of $\bs B^{-1} \bs H$ with $r+p$ random vectors, where $p$ is an oversampling factor included for accuracy. The remaining core operations of the algorithm are a QR factorization with $\bar{\bs Q}$ orthogonal in the $\bs B$-inner product and a standard eigenvalue problem---see \cite[Alg.~7]{saibaba2016randomized}. This is the method we use to compute the low rank decomposition of a preconditioned precision update.

\subsection{Relation to INLA}
\label{sec:relation_INLA}
Finally, we highlight the connections between our approach and that of the integrated nested Laplace approximation (INLA). INLA is a widely used method for hierarchical Bayesian inference in the statistics community \cite{rue2009approximate, rueheld2005}. Similar to the setting above, INLA is designed for latent Gaussian models and employs Mat\'ern-type priors with sparse precision matrices. It can be applied to a somewhat broader range of problems than those considered here, since it uses Laplace approximations for fixed hyperparameters. However, it relies crucially on the sparsity of both the prior and posterior precision matrices, which limits its applicability to inverse problems stemming from PDEs. More precisely, INLA
represents the posterior by nesting two layers: an inner Laplace approximation of the latent field conditional on the hyperparameters, combined with an outer, low-dimensional numerical quadrature over the (typically few) hyperparameters. The inner Laplace step in INLA coincides with the Gaussian approximation at the MAP point for fixed hyperparameters; in the notation of \eqref{eq:hierarchical_Bayes}, this means applying a Laplace approximation to $\mathbb{P}(dm|\bs\theta,\bs y)$. Since we assume a linear forward map and restrict hyperparameters to the prior and noise covariance, the resulting posterior is Gaussian, and thus, this approximation is exact. 

INLA cannot directly address inference problems governed by PDEs due to a structural limitation: it assumes that the high-dimensional parameters enter the likelihood via \emph{local}, linear predictors with conditionally independent observations. Conversely, PDE parameter-to-observable maps are generally \emph{non-local}: each observation depends on the global PDE solution and, consequently, the full parameter field. INLA takes advantage of the fact that the posterior precision matrix \eqref{eq:post-precision} inherits the sparsity pattern of the prior precision. In particular, $\bs A^\sharp \bs Q_{\bs \varepsilon} \bs A$ contributes only a diagonal term, which is not the case when $\bs A$ arises from a PDE solution operator. INLA leverages this sparsity together with efficient numerical linear algebra techniques—most notably sparse Cholesky factorization with a fixed sparsity pattern—to compute the determinant of the posterior precision \cite{rue2009approximate, Gaedke23}. In contrast, Bayesian inverse problems constrained by PDEs generally lack such sparsity; in this setting, low-rank approximations as exploited in this work have proven to be a highly effective alternative tool.

\section{Approximation of the hyperparameter marginal} \label{sec:pi_theta_and_PP}

Inference over both the hyperparameters and the parameter $\bs m$ requires repeated evaluation of the hyperparameter marginal, which can be expressed in terms of known densities by \eqref{eq:hierarchical_Bayes}.
A straightforward computation yields
\begin{equation}\label{eq:pi(theta|y)}
    \pi(\bs\theta |\bs y) \propto \left( \frac{|\bs Q_\text{pr}||\bs Q_{\bs\varepsilon}|}{|\bs Q_\text{post}|} \right)^{1/2} \exp \left( -\frac{1}{2} \left[ \|\bs y\|^2_{\bs Q_{\bs\varepsilon}} + \|\bs \mu_\text{pr}\|^2_{\bs M \bs Q_\text{pr}} - \|\bs \mu_\text{post}\|^2_{\bs M \bs Q_\text{post}} \right] \right) \pi(\bs\theta),
\end{equation}
where the $\bs M$ in the norms accounts for the $\bs M$-inner product of the space $\R^n_{\bs M}$.
We refer to \cite{DunlopHelinStuart20} for an equivalent expression for infinite-dimensional parameters.

Each evaluation of $\pi(\bs \theta|\bs y)$ requires two expensive steps: computation of the posterior mean $\bs \mu_{\text{post}}$ and computation of a determinant ratio. The former requires solving a system with the posterior precision $\bs Q_\text{post}$, see \eqref{eq:post-precision} and \eqref{eq:post-mean}, where each application of $\bs Q_\text{post}$ involves two PDE solves. The latter may be costly and unstable since the determinants of the prior and posterior precision, as inverses of discretized trace-class operators, diverge for $n\to \infty$.
Since by \cref{ass:complexity} the dominant cost in computing $\pi(\bs \theta | \bs y)$ is the application of the parameter-to-observable map $\bs A$, we seek methods that amortize the cost of these applications across many evaluations of $\pi(\bs \theta | \bs y)$ by (1) precomputing approximations of terms containing $\bs A$, independent of $\bs \theta$, and (2) evaluating $\pi(\bs \theta | \bs y)$ using these approximations, minimizing applications of $\bs A$ as much as possible. These methods should also allow for stable computation of the determinant ratio for any parameter dimension $n$.

The key idea is to precompute low rank approximations of variants of the prior-to-posterior precision update $\bs Q_\text{post} - \bs Q_\text{pr} = \bs A^\sharp \bs Q_{\bs \varepsilon} \bs A$. For many inverse problems, a low- or moderate-dimensional approximation to this term is possible due to the finite number of observations and other properties of the discretized parameter-to-observable map $\bs A$ that are typical in inverse problems. Such low-rank ideas have enabled the solution of high-dimensional linearized Bayesian inverse problems \cite{Bui-ThanhGhattasMartinEtAl13,spantini_optimal_2015}, and have also become a useful tool for nonlinear problems by identifying moderate-dimension likelihood-informed subspaces \cite{CuiMartinMarzouk14}. 

Below we analyze the truncation error introduced by such an approximation and state required assumptions that enable this precomputation. Then we discuss
a previously established approximation method that has a significant drawback for our problem.

\subsection{Approximation error}\label{sec:approx-error}
Given an approximation $\bs Q_{\text{post},r}$ 
for the posterior precision and the corresponding approximation $\bs \mu_{\text{post},r}$ for the posterior mean,
the error introduced by substituting $\bs Q_{\text{post},r}$ and $\bs \mu_{\text{post},r}$ into $- \log \pi(\bs \theta|\bs y)$ is $e(r,\bs \theta) := e_1(r,\bs \theta) + e_2(r, \bs \theta)$, where
\begin{align}
    e_1(r,\bs \theta) &:= - \frac{1}{2} \log \frac{|\bs Q_\text{pr}|}{|\bs Q_\text{post}|} + \frac{1}{2} \log \frac{|\bs Q_\text{pr}|}{|\bs Q_{\text{post},r}|} 
    = \frac{1}{2} \log \frac{|\bs Q_\text{post}|}{|\bs Q_{\text{post},r}|} \label{eq:e1}\\
    e_2 (r,\bs \theta) &:= -\frac{1}{2} \| \bs \mu_\text{post}\|^2_{\bs M \bs Q_\text{post}} +  \frac{1}{2} \| \bs \mu_{\text{post},r}\|^2_{\bs M \bs Q_{\text{post},r}}
    =  \frac{1}{2} \|\bs Q_\text{pr} \bs \mu_\text{pr} + 
    \bs A^\sharp \bs Q_{\bs \varepsilon} \bs y\|^2_{\bs M (\bs Q^{-1}_{\text{post},r} - \bs Q^{-1}_\text{post})}. \label{eq:e2}
\end{align}
For each method we describe, we discuss how to bound $e_1$ by choosing $r$ sufficiently large. 
The error $e_2$ depends, among other terms, on the posterior covariance-weighted norm of $\bs Q_\text{pr} \bs \mu_\text{pr}$, which may be significant due to 
the prior precision. However, $e_2$ can still be bounded by choosing $r$ sufficiently large, or by 
computing $\|\bs \mu_\text{post}\|^2_{\bs M \bs Q_\text{post}}$ exactly. This can be done by solving \eqref{eq:post-mean} using an iterative solver such as the conjugate gradient (CG) method with $\bs Q_{\text{post},r}$ as a preconditioner, as is common for linear and linearized Bayesian inverse problems \cite{villa_hippylib_2018,Bui-ThanhGhattasMartinEtAl13}.
Using CG causes $e_2(r,\bs \theta)$ to vanish. In \cref{sec:adv_diff}, we will numerically study the errors $e_1$ and $e_2$ as a function of $r$ for the case where $\bs\mu_\text{pr}=0$. In this case, we find that computing $\bs\mu_\text{post}$ using the low-rank posterior approximation is sufficient, and that the error $e_1$ generally dominates $e_2$ for the same rank $r$ -- see \cref{fig:spectra_error} (right).

\subsection{Assumptions and notation}
We make the following assumptions about the problem structure to analyze different methods for evaluating the hyperparameter marginal. 
\begin{assumption} \label{ass:theta_dependence} The prior-to-posterior update is $\bs \theta$-independent or depends on $\bs \theta$ only in a scalar multiplicative factor. Specifically,
    \begin{itemize}
        \item The noise precision can be written in the form
            \begin{equation*}
            \bs Q_{\bs \varepsilon} = \frac1{\sigma^{2}} \bs P,
            \end{equation*}
        where $\sigma$ may depend on $\bs \theta$ but $\bs P\in \mathbb R^{q\times  q}$ does not. 
        \item $\bs A$ is independent of $\bs \theta$, i.e., the hyperparameters do not enter into the parameter-to-observable map.
    \end{itemize}
\end{assumption}
Note that a scalar factor could be introduced into $\bs A$ as in the noise precision, but since scaling of the forward model is less natural as a hyperparameter, we assume no dependence of $\bs A$ on $\bs \theta$. 

\cref{ass:theta_dependence} limits the possible hyperparameters for which our methods may be applied, in particular, precluding the use of parameters of the PDE as hyperparameters.
However, it the more restrictive cases used in previous work \cite{saibaba_efficient_2019, chung_efficient_2024, buser_efficient_2025, fox_fast_2016}, such as multiplicative hyperparameters confined to the prior or a spatially uniform noise precision.

Importantly, we do not assume that the prior precision has solely multiplicative dependence on the hyperparameters, and in fact we are most interested in cases with more complex dependence on $\bs \theta$.
To underscore this, we separate the multiplicative and non-multiplicative dependence by letting%
\begin{equation}\label{eq:pr_def}
    \bs Q_{\text{pr}} = \delta^2 \bs R^\sharp \bs R,
\end{equation}
where $\delta$ and $\bs R$ may both depend on $\bs \theta$. This decomposition is not unique, but certain choices are more natural -- see \cref{sec:unprecond}. Note that the matrix $\bs R$, while useful from a mathematical standpoint, never needs to be computed explicitly.

With this notation, we can write the posterior precision matrix as
\begin{equation}\label{eq:Qpost-alt}
    \bs Q_\text{post} = \delta^2\bs R^\sharp \bs R +\frac{1}{\sigma^2}\bs A^\sharp \bs P \bs A.
\end{equation}
The second term in \eqref{eq:Qpost-alt} depends on $\bs \theta$ only through the scaling $1/\sigma^2$, while the first term may depend on $\bs\theta$ in a more complicated way. 

\subsection{Prior-preconditioned precision update (PP)}\label{sec:prior-precon-update}
An established method to approximate the posterior covariance $\bs Q_{\text{post}}^{-1}$ starts by rewriting \eqref{eq:Qpost-alt} as
\begin{equation}
    \bs Q_\text{post} = \delta^2 \bs R^\sharp \left(\bs I + \frac{1}{\sigma^2\delta^2} (\bs R^\sharp)^{-1} \bs A^\sharp \bs P \bs A \bs R^{-1} \right) \bs R.
\end{equation}
This suggests a low-rank approximation 
of the \emph{prior-preconditioned update}, $(\bs R^\sharp)^{-1} \bs A^\sharp \bs P \bs A \bs R^{-1}$, whose eigenvalue decomposition can be split as follows:
\begin{equation} \label{eq:pr_precon_approx}
    (\bs R^\sharp)^{-1} \bs A^\sharp \bs P \bs A \bs R^{-1}
    = \bs V \bs \Lambda \bs V^\top = \Vr \bs \Lambda_r \Vr^\top + \Vrp \bs \Lambda_{r^+} \Vrp^\top,
\end{equation}
where $\bs \Lambda := \text{diag}(\lambda_1, \ldots, \lambda_n)$ contains the eigenvalues in descending order, $\bs V \in \R^{n \times n}$ are the eigenvectors, $\bs \Lambda_r := \text{diag}(\lambda_1, \ldots, \lambda_r)$ and $\Vr \in \R^{n \times r}$ form a rank-$r$ approximation, and $\bs \Lambda_{r^+} := \text{diag}(\lambda_{r+1}, \ldots, \lambda_n)$ and $\Vrp \in \R^{n \times n-r}$ form the remainder term.
Then the rank-$r$ updated approximate posterior precision is given by 
\begin{equation}\label{eq:pr_precon_approx_prec}
    \bs Q_{\text{post},r} := \delta^2 \bs R^\sharp \left(\bs I + \frac{1}{\sigma^2\delta^2} \Vr \bs \Lambda_r \Vr^\top \right) \bs R.
\end{equation}
Compared to the (unpreconditioned) precision update $\bs A^\sharp \bs P \bs A$ in \eqref{eq:Qpost-alt}, the \emph{prior-preconditioned} update involves the prior covariance. As a (discretized) trace-class operator, its eigenvalues must decay, so the PP update typically admits a more accurate low-rank approximation at a given rank. In fact, this method has been shown to be optimal in the sense that it minimizes both the Hellinger distance and K-L divergence between the true and the approximate posterior, among approximations with the same rank \cite{spantini_optimal_2015,CarereLie25}.

This posterior precision approximation allows for efficient computation of the posterior covariance and of the determinant ratio term in \eqref{eq:pi(theta|y)}. For the covariance,
\eqref{eq:pr_precon_approx_prec} can be inverted using the Sherman-Morrison-Woodbury identity, resulting in
\begin{equation} \label{eq:pr_precon_approx_cov}
    \bs Q_{\text{post},r}^{-1} = \delta^{-2} \bs R^{-1} \left(\bs I - \Vr \bs D_r \Vr^\top \right) (\bs R^\sharp)^{-1},
\end{equation}
where $\bs D$ is the diagonal matrix with entries $D_{ii} = \frac{\lambda_i}{\lambda_i + \sigma^2 \delta^2}$, and $\bs D_r \in \R^{r \times r}$ and $\bs D_{r^+} \in \R^{n-r \times n-r}$ are the upper left and lower right blocks of $\bs D$, respectively. 
The negative log of the determinant ratio, excluding the easy-to-compute $|\bs Q_{\bs \varepsilon}|$, can be rewritten as 
\begin{equation*} 
    - \log \frac{|\bs Q_\text{pr}|}{|\bs Q_\text{post}|} 
    = \log \det (\bs Q_\text{pr}^{-1} \bs Q_\text{post})
    = \log \det (\delta^{-2}\bs (\bs R^\sharp)^{-1} \bs Q_\text{post} \bs R^{-1})
    = \log \det \left(\bs I + \frac{1}{\sigma^2\delta^2} \bs V \bs \Lambda \bs V^\top \right).
\end{equation*}
Since $\det(\bs I+\bs A\bs B) = \det(\bs I+\bs B\bs A)$ for any $\bs A,\bs B$, and the columns of $\bs V$ are orthonormal,
\begin{equation} \label{eq:pr_precon_det_ratio}
    - \log \frac{|\bs Q_\text{pr}|}{|\bs Q_\text{post}|} 
    = \log \det \left(\bs I + \frac{1}{\sigma^2\delta^2} \bs \Lambda \right) 
    = \sum_{i=1}^n \log \left(1+\frac{\lambda_i}{\sigma^2 \delta^2} \right).
\end{equation}
Following the same argument, we can approximate this term by
\begin{equation} \label{eq:pr_precon_det_ratio_approx}
    - \log \frac{|\bs Q_\text{pr}|}{|\bs Q_{\text{post},r}|}
    = \sum_{i=1}^r \log \left(1+\frac{\lambda_i}{\sigma^2 \delta^2} \right).
\end{equation}
Note that prior-preconditioning avoids the $n \to \infty$ divergence noted above, since the eigenvalues of the preconditioned update are summable as a consequence of the trace-class prior covariance.

The resulting error in the log-determinant component can therefore be bounded as:
\begin{equation} \label{eq:pr_precon_det_ratio_error}
    e_1(r,\bs \theta) =  \frac{1}{2} \sum_{i=r+1}^n \log \left(1+\frac{\lambda_i}{\sigma^2 \delta^2} \right) \leq \frac{1}{2 \sigma^2 \delta^2} \sum_{i=r+1}^n \lambda_i.
\end{equation}
Thus, to obtain a small error $e(r, \bs \theta)$, 
one can choose $r$ such that $\lambda_{r+1} \ll \sigma^2 \delta^2$.

To control the error term $e_2$, one can either compute $\bs \mu_\text{post}$ exactly using CG, preconditioned with $\bs Q_{\text{post},r}$ (as discussed above), or directly use the low-rank posterior approximation to approximate $\bs \mu_\text{post}$. Following the latter route and
defining $\bs g := \bs Q_\text{pr} \bs \mu_\text{pr} + 
    \bs A^\sharp \bs Q_{\bs \varepsilon} \bs y$, that error is
\begin{equation} \label{eq:pr_precon_CG_error}
\begin{aligned}
    e_2 (r,\bs \theta)
    &= \frac{1}{2} \|\bs g\|^2_{\bs M (\bs Q^{-1}_{\text{post},r} - \bs Q^{-1}_\text{post})}
    =  \frac{1}{2 \delta^2} \bs g^\top \bs M \bs R^{-1} \Vrp \bs D_{r^+} \Vrp^\top \bs (\bs R^\sharp)^{-1} \bs g \\
    &\leq \frac{\lambda_{r+1}}{2(\lambda_{r+1} + \sigma^2 \delta^2)} \|\bs g\|^2_{\bs M \bs Q^{-1}_\text{pr}},
\end{aligned}
\end{equation}
because 
\begin{equation*}
    \max_{\bs x \in \R^{n-r}, \bs x \neq 0} \frac{\bs x^\top \Vrp \bs D_{r^+} \Vrp^\top \bs x}{\bs x^\top \bs x} = \frac{\lambda_{r+1}}{\lambda_{r+1} + \sigma^2 \delta^2}.
\end{equation*}
Thus, for $e_2(r,\bs \theta)$ to be small (and no iterative solver to be necessary),
$r$ should be chosen large enough that $\lambda_{r+1} \ll \sigma^2 \delta^2 (\|\bs g\|^2_{\bs M \bs Q^{-1}_\text{pr}}-1)^{-1}$. Note that the right hand side in this estimate depends on the data and the prior mean.


For problems with hyperparameters that do not appear in the prior precision or that enter only in the scaling factor $\delta^2$, PP is the optimal method for low-rank approximation. However, it is not a good choice for problems where $\bs R$ depends on $\bs \theta$, since the prior-preconditioning adds a dependence on $\bs \theta$ to the update. Therefore, for each $\bs \theta$, a new low-rank approximation would have to be computed. The next section focuses on efficient methods to avoid such recomputation.

\section{Amortized approximation of $\boldmath{\pi(\bs \theta|\bs y)}$} \label{sec:WP_and_UP}
To amortize computational costs across many evaluations of $\pi(\bs \theta|\bs y)$ when $\bs R$ depends on $\bs \theta$,
we introduce and analyze a framework that exchanges the optimality of the PP approximation for a higher rank $\bs \theta$-independent approximation that allows for precomputation. In \cref{sec:weakest,sec:unprecond} we describe two variants of this framework and in \cref{sec:complexity-comparison} we compare them to PP with respect to their computational cost.

\subsection{Weakest prior-preconditioned precision update (WP)}\label{sec:weakest}
An alternative to computing the prior-preconditioned update for each $\bs \theta$ is to use an upfront approximation of the update preconditioned with a $\bs \theta$-independent prior.
Namely, we use a
reference prior precision $\hat{\bs Q}_\text{pr} := \hat{\bs R}\mathstrut^\sharp \hat{\bs R}$, in which we choose $\hat{\delta} = 1$ without loss of generality.
We denote the eigendecomposition of the update preconditioned by this reference prior by
\begin{equation} \label{eq:wk_precon_decomp}
    (\hat{\bs R}\mathstrut^\sharp)^{-1} \bs A^\sharp \bs P \bs A \hat{\bs R}\mathstrut^{-1}
    = \hat{\bs V} \hat{\bs \Lambda} \hat{\bs V}\mathstrut^\top 
    = \Vhatr \hat{\bs \Lambda}_r \Vhatr^\top + \Vhatrp \hat{\bs \Lambda}_{r^+} \Vhatrp^\top.
\end{equation}
After the upfront computation of the rank-$r$ approximation, we approximate the posterior precision for each $\bs \theta$ by
\begin{equation} \label{eq:wk_precon_approx_prec}
    \hat{\bs Q}_{\text{post},r} 
    := \delta^2 \bs R^\sharp \left(\bs I + \frac{1}{\sigma^2\delta^2} (\bs R^\sharp)^{-1} \hat{\bs R}\mathstrut^\sharp \Vhatr \hat{\bs \Lambda}_r \Vhatr^\top \hat{\bs R} \bs R^{-1} \right) \bs R,
\end{equation}
i.e., we replace the $\hat{\bs Q}_\text{pr}$ reference preconditioning with the target $\bs Q_\text{pr}$ preconditioning. This approximation is nearly of the form \eqref{eq:pr_precon_approx_prec}, except that the columns of $(\bs R^\sharp)^{-1} \hat{\bs R}\mathstrut^\sharp \Vhatr$ are in general not orthogonal. Re-orthogonalization yields $\bar{\bs V}_{\!\!r} \bar{\bs \Lambda}_r \bar{\bs V}_{\!\!r}^\top := (\bs R^\sharp)^{-1} \hat{\bs R}\mathstrut^\sharp \Vhatr \hat{\bs \Lambda}_r \Vhatr^\top \hat{\bs R} \bs R^{-1}$. Thus, we arrive at an approximation of the form \eqref{eq:pr_precon_approx_prec}, which in turn allows us to employ \eqref{eq:pr_precon_approx_cov} and \eqref{eq:pr_precon_det_ratio_approx} to compute the approximate posterior covariance and the determinant ratio term. The re-orthogonalization, which may be carried out by a randomized eigensolver, requires operations with the prior. Crucially, it does not require any additional applications of $\bs A$ or $\bs A^\sharp$.

However, care must be taken to control the error caused by truncating \eqref{eq:wk_precon_decomp} by choosing $r$. In particular, here we derive a bound on the error \eqref{eq:e1}, while using CG to ensure $e_2 = 0$.
This bound will depend on the choice of the reference prior, which should be taken to be sufficiently weak, as clarified in what follows. 

\begin{theorem} \label{thm:weakest_precon}
    Suppose a reference prior precision $\hat{\bs Q}_\textup{pr} = \hat{\bs R}\mathstrut^\sharp \hat{\bs R}$ satisfying
    \begin{equation}\label{eq:R-cond}
        \hat{\bs R}\mathstrut^\top \hat{\bs R} \preceq
        \bs R^\top \bs R
    \end{equation}
    for every $\bs \theta$ in the support of the hyperprior $\pi_\text{hyp}(\bs \theta)$, and define $\sigma_\textup{min}$ and $\delta_\textup{min}$ such that $\sigma \geq \sigma_\textup{min}$ and $\delta \geq \delta_\textup{min}$ for every such $\bs \theta$.
    Then, for $\Vhatr$ and $\hat{\bs \Lambda}_r = \textup{diag}(\hat{\lambda}_1, \ldots, \hat{\lambda}_r)$ defined as in \eqref{eq:wk_precon_decomp}, using the approximate posterior precision \eqref{eq:wk_precon_approx_prec} results in the error bound
    \begin{equation}\label{eq:wk_precon_error}
        e_1(r,\bs \theta) 
        \leq \frac{1}{2 \sigma^2 \delta^2} \left( r \hat{\lambda}_{r+1} + \sum_{r+1}^n \hat{\lambda}_i \right)
        \leq \frac{1}{2 \sigma_\textup{min}^2 \delta_\textup{min}^2} \left( r \hat{\lambda}_{r+1} + \sum_{r+1}^n \hat{\lambda}_i \right)
    \end{equation}
    for all $\bs \theta$ in the support of the hyperprior.
\end{theorem}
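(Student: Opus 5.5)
The plan is to reduce $e_1$ to a log-determinant of a low-rank perturbation of the identity and then bound it by a trace. Set $c := 1/(\sigma^2\delta^2)$ and $\bs T := \hat{\bs R}\bs R^{-1}$, and write $\hat{\bs H} := \hat{\bs V}\hat{\bs\Lambda}\hat{\bs V}^\top$, split as in \eqref{eq:wk_precon_decomp} into $\hat{\bs H}_r := \Vhatr\hat{\bs\Lambda}_r\Vhatr^\top$ and $\hat{\bs H}_{r^+} := \Vhatrp\hat{\bs\Lambda}_{r^+}\Vhatrp^\top$.

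\textbf{Step 1: factor both precisions.} Since $(\bs R^\sharp)^{-1}\bs A^\sharp\bs P\bs A\bs R^{-1} = (\bs R^\sharp)^{-1}\hat{\bs R}^\sharp\,\hat{\bs H}\,\hat{\bs R}\bs R^{-1}$, both the exact precision \eqref{eq:Qpost-alt} and the approximation \eqref{eq:wk_precon_approx_prec} have the form $\delta^2\bs R^\sharp(\bs I + c\,\bs T^\ast \bs K \bs T)\bs R$. Here $\bs K=\hat{\bs H}$ for the exact precision and $\bs K=\hat{\bs H}_r$ for the approximation, and $\bs T^\ast := (\bs R^\sharp)^{-1}\hat{\bs R}^\sharp$. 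The outer factors cancel in the ratio, so
\begin{equation*}
2e_1 = \log\det(\bs I + c\bs T^\ast\hat{\bs H}\bs T) - \log\det(\bs I + c\bs T^\ast\hat{\bs H}_r\bs T).
\end{equation*}

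\textbf{Step 2: pass to a trace.} Let $\bs B := \bs I + c\bs T^\ast\hat{\bs H}_r\bs T$, which satisfies $\bs B \succeq \bs I$. Then
\begin{equation*}
2e_1 = \log\det(\bs I + c\,\bs B^{-1/2}\bs T^\ast\hat{\bs H}_{r^+}\bs T\bs B^{-1/2}).
\end{equation*}
Using $\log(1+x)\le x$ on the eigenvalues and $\bs B^{-1}\preceq \bs I$, this gives
\begin{equation*}
2e_1 \le c\,\mathrm{tr}(\bs T^\ast\hat{\bs H}_{r^+}\bs T) = c\,\mathrm{tr}(\hat{\bs H}_{r^+}\bs T\bs T^\ast).
\end{equation*}

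\textbf{Step 3: use the weakness condition.} Condition \eqref{eq:R-cond} gives $\bs R^{-\top}\hat{\bs R}^\top\hat{\bs R}\bs R^{-1}\preceq\bs I$, i.e.\ $\|\bs T\|\le 1$, so $\bs T\bs T^\ast\preceq\bs I$. Since $\hat{\bs H}_{r^+}\succeq 0$, it follows that $\mathrm{tr}(\hat{\bs H}_{r^+}\bs T\bs T^\ast)\le\mathrm{tr}(\hat{\bs H}_{r^+})=\sum_{i>r}\hat\lambda_i$. This already implies \eqref{eq:wk_precon_error}, since the extra term $r\hat\lambda_{r+1}$ is nonnegative. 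The second inequality in \eqref{eq:wk_precon_error} is immediate from $\sigma\ge\sigma_{\min}$ and $\delta\ge\delta_{\min}$.

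\textbf{Fallback.} If the adjoint bookkeeping forces a cruder argument, I would instead compare eigenvalues. By Weyl/Courant--Fischer, the eigenvalues $\mu_i$ of $\bs T^\ast\hat{\bs H}\bs T$ and $\mu_i^{(r)}$ of $\bs T^\ast\hat{\bs H}_r\bs T$ satisfy $\mu_i\le\hat\lambda_i$ (from $\|\bs T\|\le 1$). They also satisfy $\mu_i - \mu^{(r)}_i \le \hat\lambda_{r+1}$ for $i\le r$ and $\mu_i\le\hat\lambda_i$ for $i>r$. Summing $\log\frac{1+c\mu_i}{1+c\mu_i^{(r)}}\le c(\mu_i-\mu_i^{(r)})$ over $i\le r$ then produces exactly the $r\hat\lambda_{r+1}$ term, and summing over $i>r$ produces the tail $\sum_{i>r}\hat\lambda_i$. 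This suggests that the stated form of the bound comes from such an interlacing argument.

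\textbf{Main obstacle.} The delicate step is keeping track of adjoints. The update is written with $\sharp$ (the $\bs M$-adjoint), while \eqref{eq:R-cond} and the orthonormality of $\hat{\bs V}$ are Euclidean. I first need to check that $\bs T^\ast$ is genuinely the Euclidean transpose of $\bs T$ in the symmetric form used in \eqref{eq:wk_precon_decomp}, i.e.\ that the factor $\bs M^{-1}$ in $(\bs R^\sharp)^{-1}\hat{\bs R}^\sharp$ cancels. Only then does $\|\bs T\|_2\le1$ follow from \eqref{eq:R-cond} and the trace manipulations in Steps 2 and 3 hold. If the factor does not cancel, I would symmetrize with $\bs M^{1/2}$ and verify that \eqref{eq:R-cond} still yields the needed contraction.
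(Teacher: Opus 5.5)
Your proposal is correct. Steps 1--3 take a genuinely different route from the paper and yield a sharper bound; your fallback is essentially the paper's proof.

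\textbf{The adjoint concern.} It resolves cleanly. Since $\bs R^\sharp=\bs M^{-1}\bs R^\top$, one has $(\bs R^\sharp)^{-1}\hat{\bs R}^\sharp=\bs R^{-\top}\hat{\bs R}^{\top}=\bs T^\top$ and $(\hat{\bs R}^\sharp)^{-1}\bs A^\sharp\bs P\bs A\hat{\bs R}^{-1}=\hat{\bs R}^{-\top}\bs A^\top\bs P\bs A\hat{\bs R}^{-1}$. So every matrix involved is Euclidean-symmetric, \eqref{eq:R-cond} is literally $\bs T^\top\bs T\preceq\bs I$, and no $\bs M^{1/2}$ symmetrization is needed.

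\textbf{How the paper argues.} It works index by index. Let $\bar\lambda_i$ be the eigenvalues of the truncated update $\bs T^\top\Vhatr\hat{\bs\Lambda}_r\Vhatr^\top\bs T$ and $\lambda_i$ those of the full PP update. The paper splits $2e_1$ into $\sum_{i\le r}\log\bigl(1+\frac{\lambda_i-\bar\lambda_i}{\sigma^2\delta^2+\bar\lambda_i}\bigr)+\sum_{i>r}\log\bigl(1+\frac{\lambda_i}{\sigma^2\delta^2}\bigr)$. It then uses Weyl's inequality to get $\bar\lambda_i\le\lambda_i\le\bar\lambda_i+\hat\lambda_{r+1}$ for the first sum, and a monotonicity argument to get $\lambda_i\le\hat\lambda_i$ for the second. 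The $r\hat\lambda_{r+1}$ term comes from charging each of the first $r$ eigenvalues the worst-case Weyl shift separately.

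\textbf{What your route does differently.} You factor $\bs I+c\bs T^\top\hat{\bs H}\bs T$ through your $\bs B^{1/2}$, so the ratio becomes a single log-determinant of a PSD perturbation of the identity. The trace bound (equivalently, concavity of $\log\det$) then shows that the total shift is controlled by $\mathrm{tr}(\hat{\bs H}_{r^+})$ alone. This gives $e_1\le\frac{1}{2\sigma^2\delta^2}\sum_{i>r}\hat\lambda_i$, which implies \eqref{eq:wk_precon_error} and has the same form as the PP bound \eqref{eq:pr_precon_det_ratio_error} with $\hat\lambda_i$ in place of $\lambda_i$. You can even keep the logarithms: compare the eigenvalues of $\bs B^{-1/2}\bs T^\top\hat{\bs H}_{r^+}\bs T\bs B^{-1/2}$ with those of $\hat{\bs H}_{r^+}$ using the two contractions $\bs B^{-1}\preceq\bs I$ and $\bs T\bs T^\top\preceq\bs I$. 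That yields $2e_1\le\sum_{i>r}\log\bigl(1+\hat\lambda_i/(\sigma^2\delta^2)\bigr)$.

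\textbf{What each approach buys.} The paper's route gives the two-sided interlacing $\bar\lambda_i\le\lambda_i\le\bar\lambda_i+\hat\lambda_{r+1}$, which says how well the re-orthogonalized spectrum tracks the true PP spectrum for each $\bs\theta$. Yours is shorter and gives a sharper bound. In particular, it shows that the $r\hat\lambda_{r+1}$ term, which the paper remarks may be significant, is not needed.
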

\begin{proof}
Letting $\bs B := (\bs R^\sharp)^{-1} \hat{\bs R}\mathstrut^\sharp$, we denote by $\bar{\lambda}_i$ the $i$-th eigenvalue of $\bs B \Vhatr \hat{\bs \Lambda}_r \Vhatr^\top \bs B^\top$, and by $\lambda_i$ the $i$-th eigenvalue of the PP update, as in \eqref{eq:pr_precon_approx}. Using \eqref{eq:pr_precon_det_ratio} and \eqref{eq:pr_precon_det_ratio_approx},
\begin{align*}
    e_1(r,\bs \theta) 
    &= -  \frac{1}{2} \log \frac{|\bs Q_\text{pr}|}{|\bs Q_{\text{post}}|}
    + \frac{1}{2} \log \frac{|\bs Q_\text{pr}|}{|\hat{\bs Q}_{\text{post},r}|}\\ 
    &= \frac{1}{2} \sum_{i=1}^n \log \left( 1 + \frac{\lambda_i}{\sigma^2 \delta^2} \right) 
    - \frac{1}{2} \sum_{i=1}^r \log \left( 1 + \frac{\bar{\lambda}_i}{\sigma^2 \delta^2} \right)\\
    &= \frac{1}{2} \sum_{i=1}^r \log \left( 1 + \frac{{\lambda}_i - \bar\lambda_i}{\sigma^2 \delta^2 + \bar{\lambda}_i} \right)
    + \frac{1}{2} \sum_{i=r+1}^n \log \left( 1 + \frac{\lambda_i}{\sigma^2 \delta^2} \right).
\end{align*}
To bound the first sum, we seek a bound on $\lambda_i - 
\bar{\lambda}_i$. Note that $\bs B \hat{\bs V} \hat{\bs \Lambda} \hat{\bs V}\mathstrut^\top \bs B^\top = \bs B \Vhatr \hat{\bs \Lambda}_r \Vhatr^\top \bs B^\top + \bs B \Vhatrp \hat{\bs \Lambda}_{r^+} \Vhatrp^\top \bs B^\top$ equals the PP update and therefore has eigenvalues $\lambda_i$. Let $\bs C := \bs B \Vhatr \hat{\bs \Lambda}_r \Vhatr^\top \bs B^\top$ and $\bs D := \bs B \Vhatrp \hat{\bs \Lambda}_{r^+} \Vhatrp^\top \bs B^\top$, then Weyl's inequality for Hermitian matrices states that
\begin{equation*}
    \lambda_i(\bs C) + \lambda_\textup{min}(\bs D) \leq \lambda_i(\bs C + \bs D) \leq \lambda_i(\bs C) + \lambda_\textup{max}(\bs D).
\end{equation*}
Observe that $\lambda_\textup{max}(\bs D) \leq \lambda_\textup{max}(\Vhatrp \hat{\bs \Lambda}_{r^+} \Vhatrp^\top) \lambda_\textup{max}(\bs B^T \bs B)$. From \eqref{eq:R-cond}, we have
$    \bs B^\top \bs B 
    \preceq \bs I
    $,
so $\lambda_\textup{max}(\bs D) \leq \hat{\lambda}_{r+1}$. Since positive semi-definiteness implies $\lambda_\textup{min}(\bs D) \geq 0$, Weyl's inequality becomes
\begin{equation*}
    \bar{\lambda}_i \leq \lambda_i \leq \bar{\lambda}_i + \hat{\lambda}_{r+1}.
\end{equation*}
Thus the first sum in $e_1$ is bounded by
\begin{equation*}
    \frac{1}{2} \sum_{i=1}^r \log \left( 1 + \frac{\lambda_i - \bar{\lambda}_i}{\sigma^2 \delta^2 + \bar{\lambda}_i} \right) 
    \leq \frac{1}{2} \sum_{i=1}^r \log \left( 1 + \frac{\hat{\lambda}_{r+1}}{\sigma^2 \delta^2} \right) 
    \leq \frac{1}{2} \frac{r}{\sigma^2 \delta^2} \hat{\lambda}_{r+1},
\end{equation*}
using $\log(1+x) \leq x$ and $\bar{\lambda}_i \geq 0$.

For the second sum in $e_1$, note that $\hat{\bs \Lambda}\mathstrut^{1/2} \hat{\bs V}\mathstrut^\top \bs B^\top \bs B \hat{\bs V} \hat{\bs \Lambda}\mathstrut^{1/2}$ has eigenvalues $\lambda_i$, by the cyclic property of eigenvalues. Since $\bs B^\top \bs B \preceq \bs I$, we have $\hat{\bs \Lambda}\mathstrut^{1/2} \hat{\bs V}\mathstrut^\top \bs B^\top \bs B \hat{\bs V} \hat{\bs \Lambda}\mathstrut^{1/2} \preceq \hat{\bs \Lambda}$; thus, again by Weyl's monotonicity theorem, for every $i$, $\lambda_i \leq \hat{\lambda}_i$. Substituting $\hat{\lambda}_i$ for $\lambda_i$ in the second sum of $e_1$ and combining this with the previously derived bound on the first sum gives the first inequality in \eqref{eq:wk_precon_error}. The second inequality follows directly from $\sigma \geq \sigma_\textup{min}$ and $\delta \geq \delta_\textup{min}$.
\end{proof}

This theorem allows for a direct comparison between the error bound \eqref{eq:wk_precon_error} and \eqref{eq:pr_precon_det_ratio_error} for the direct PP update.
We observe that \eqref{eq:wk_precon_error} uses the eigenvalues $\hat\lambda_i$ from the reference prior and the minimal values of $\sigma, \delta$, trading off a lower $\bs \theta$-dependent bound for a higher one that is uniform  for all $\bs \theta$. Note that the additional $r \hat{\lambda}_{r+1}$ term may be significant, but is guaranteed to decrease with $r$ since the eigenvalues $\hat{\lambda}_i$ decrease faster than $1/i$, as eigenvalues of a trace-class operator.
In particular, as in \cref{sec:prior-precon-update}, \eqref{eq:wk_precon_error} suggests a natural cutoff for the rank $r$: the error is small when $\hat{\lambda}_{r+1} \ll \sigma_\text{min}^2 \delta_\text{min}^2$. 
Moreover, it also implies a tighter bound for each value of $\bs \theta$, under which the error is low if $\hat{\lambda}_{r+1} \ll \sigma^2 \delta^2$. This suggests the following procedure: precompute the approximation to a rank $r$ that satisfies the $\mathcal{O}(\sigma_\text{min}^2 \delta_\text{min}^2)$ eigenvalue cutoff, and truncate the approximation further using a $\mathcal{O}(\sigma^2 \delta^2)$-based cutoff for each $\bs\theta$. 

We refer to the approximation in \eqref{eq:wk_precon_approx_prec} as the \emph{weakest prior preconditioned (WP) approximation} since \eqref{eq:R-cond} states that the reference prior $\hat{\bs Q}_\text{pr}$ has ``smaller" precision than $\bs Q_\text{pr}(\bs \theta)$ for any $\bs \theta$ (after normalizing by removing any overall precision scaling $\delta^2$).
The error bound in \cref{thm:weakest_precon} requires a $\hat{\bs Q}_\text{pr}$ that satisfies \eqref{eq:R-cond} for every $\bs \theta$. In particular,
the hyperprior must not permit arbitrarily weak priors, arbitrarily low noise variance, or arbitrarily high prior variance ($\sigma, \delta = 0$). These issues are examined in more detail within a concrete example in \cref{sec:adv_diff}.

As an example of a reference prior, we consider
a discretized Mat\'ern kernel Gaussian random field, a common choice for PDE-governed inverse problems. In particular, we consider kernels with covariance operators given by the operator $\delta^{-2} (I - \gamma \Delta)^{-2}$, which are Mat\'ern with $\nu = 2 - d/2$ in dimension $d$ \cite{lindgren_explicit_2011}. Thus the action of the covariance matrix corresponds to two successive solutions of a PDE with operator $I - \gamma \Delta$ discretized using finite elements. Letting $\delta,\gamma > 0$ be hyperparameters in $\bs \theta$, we state the following lemma, which allows us to apply \cref{thm:weakest_precon}.
\begin{lemma}
\label{lem:matern}
    Let $Q_\textup{pr}(\bs \theta) = \delta^{2} (I - \gamma \Delta)^{2}$, and $\gamma_\text{min} \geq 0$ be the minimum $\gamma$ in the support of the hyperprior, and define a reference prior with precision $\hat{Q}_\textup{pr} = (I - \gamma_\text{min} \Delta)^{2}$. Then the discretizations $\bs Q_\textup{pr}(\bs \theta)$ and $\hat{\bs Q}_\textup{pr}$ satisfy \eqref{eq:R-cond} for every $\bs \theta$ in the support of the hyperprior.
\end{lemma}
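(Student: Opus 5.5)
We begin by fixing a natural factorization \eqref{eq:pr_def} of the discretized Mat\'ern precision. With the standard finite element discretization, the operator $I-\gamma\Delta$ becomes $\bs M^{-1}\bs K_\gamma$ with $\bs K_\gamma := \bs M + \gamma \bs S$. Here $\bs S$ is the symmetric positive semidefinite stiffness matrix. The discretized squared operator is then $\bs Q_\text{pr} = \delta^2 \bs M^{-1}\bs K_\gamma \bs M^{-1}\bs K_\gamma$. We therefore set $\bs R := \bs M^{-1}\bs K_\gamma$, so that $\bs R^\sharp = \bs M^{-1}\bs R^\top \bs M = \bs M^{-1}\bs K_\gamma \bs M^{-1}\bs M = \bs M^{-1}\bs K_\gamma$. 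This gives $\bs Q_\text{pr}=\delta^2\bs R^\sharp\bs R$ as required. Likewise $\hat{\bs R} := \bs M^{-1}\bs K_{\gamma_\text{min}}$ with $\hat\delta=1$.

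Condition \eqref{eq:R-cond} then reads $\bs K_{\gamma_\text{min}}\bs M^{-2}\bs K_{\gamma_\text{min}} \preceq \bs K_\gamma \bs M^{-2}\bs K_\gamma$. The plan is to reduce this to a statement about commuting-like pencils. Consider the generalized eigenproblem $\bs S\bs x = \mu \bs M\bs x$ with $\bs M$-orthonormal eigenvectors $\bs X$, so $\bs X^\top\bs M\bs X=\bs I$ and $\bs X^\top \bs S\bs X = \bs\Lambda_S\succeq 0$. Then $\bs K_\gamma = \bs X^{-\top}(\bs I+\gamma\bs\Lambda_S)\bs X^{-1}$ and $\bs M^{-1} = \bs X\bs X^\top$. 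Substituting gives $\bs K_\gamma\bs M^{-2}\bs K_\gamma = \bs X^{-\top}(\bs I+\gamma\bs\Lambda_S)(\bs X^\top\bs X)(\bs I+\gamma\bs\Lambda_S)\bs X^{-1}$.

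This is the main obstacle. The middle factor $\bs X^\top\bs X$ equals $\bs I$ only when $\bs M=\bs I$, so the diagonal factors do not simply combine into $(\bs I+\gamma\bs\Lambda_S)^2$. The inequality $\bs D_1\bs G\bs D_1\preceq \bs D_2\bs G\bs D_2$ for diagonal $\bs 0\preceq\bs D_1\preceq\bs D_2$ and SPD $\bs G$ is not automatic.

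I would handle this by choosing a different, equally natural factorization of the prior. This is allowed, since \eqref{eq:pr_def} is not unique and \eqref{eq:R-cond} concerns $\bs R^\top\bs R$. The idea is to factor symmetrically through the mass matrix. Set $\bs R := \bs M^{-1/2}\bs K_\gamma\bs M^{-1/2}$, acting in the coordinates where $\bs M$ is the identity. Equivalently, work in the Euclidean representation $\tilde{\bs m}=\bs M^{1/2}\bs m$, in which all operators become symmetric. In these coordinates $\bs R_\gamma = \bs I + \gamma\tilde{\bs S}$ with $\tilde{\bs S}:=\bs M^{-1/2}\bs S\bs M^{-1/2}\succeq 0$. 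The family $\{\bs R_\gamma\}$ then commutes and is simultaneously diagonalized by the orthogonal eigenvectors of $\tilde{\bs S}$.

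With this choice, $\bs R_\gamma^\top\bs R_\gamma=(\bs I+\gamma\tilde{\bs S})^2$. The eigenvalues are $(1+\gamma\mu_j)^2$ with $\mu_j\ge0$, and these are nondecreasing in $\gamma$. Since $\gamma\ge\gamma_\text{min}\ge0$ for every $\bs\theta$ in the support, we get $(1+\gamma_\text{min}\mu_j)^2\le(1+\gamma\mu_j)^2$ in a common orthonormal eigenbasis. This yields $\hat{\bs R}^\top\hat{\bs R}\preceq\bs R^\top\bs R$.

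Finally, I would check that this factorization is consistent with how the proof of \cref{thm:weakest_precon} uses $\bs B=(\bs R^\sharp)^{-1}\hat{\bs R}^\sharp$ and $\bs B^\top\bs B\preceq\bs I$. In the commuting basis, $\bs B$ has eigenvalues $(1+\gamma_\text{min}\mu_j)/(1+\gamma\mu_j)\in(0,1]$, so $\bs B^\top\bs B\preceq\bs I$ holds directly, independent of $\delta$.

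If the paper insists on the $\bs R=\bs M^{-1}\bs K_\gamma$ representation, the fallback is a lumped (diagonal) mass matrix. With lumping, the $\bs M^{1/2}$ scalings commute with the diagonal weights in the required way, and the same monotonicity argument applies.
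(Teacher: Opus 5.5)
Your closing inequality $(\bs I+\gamma_\text{min}\tilde{\bs S})^2\preceq(\bs I+\gamma\tilde{\bs S})^2$ is correct. The step connecting it to \eqref{eq:R-cond} is not justified, however, and the reason you give for it is wrong. In the paper, $\bs R$ maps $\R^n_{\bs M}$ into Euclidean $\R^n$, so by \eqref{eq:adjoint} $\bs R^\sharp=\bs M^{-1}\bs R^\top$, not $\bs M^{-1}\bs R^\top\bs M$. Only with this convention is the PP update $(\bs R^\sharp)^{-1}\bs A^\sharp\bs P\bs A\bs R^{-1}=\bs R^{-\top}\bs A^\top\bs P\bs A\bs R^{-1}$ symmetric, and only then is \eqref{eq:R-cond} equivalent to $\bs B^\top\bs B\preceq\bs I$ in \cref{thm:weakest_precon}. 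Hence $\bs R^\top\bs R=\delta^{-2}\bs M\bs Q_\text{pr}=\bs K_\gamma\bs M^{-1}\bs K_\gamma$ is completely determined once $\delta$ is fixed. The non-uniqueness in \eqref{eq:pr_def} is the split between $\delta$ and $\bs R$; it gives you no freedom to choose a more convenient $\bs R^\top\bs R$.

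Your $\bs M^{-2}$ ``obstacle'' is an artifact of the wrong adjoint, although it is a genuine obstacle for that matrix. Your replacement $\bs R=\bs M^{-1/2}\bs K_\gamma\bs M^{-1/2}$ does not, in general, satisfy $\bs Q_\text{pr}=\delta^2\bs R^\sharp\bs R$ in the original coordinates under either convention. What actually makes your tilde computation prove the lemma is the congruence $\bs M^{-1/2}(\bs K_\gamma\bs M^{-1}\bs K_\gamma)\bs M^{-1/2}=(\bs I+\gamma\tilde{\bs S})^2$, together with the fact that congruence preserves $\preceq$. You need to state this. It fails for your $\bs K_\gamma\bs M^{-2}\bs K_\gamma$, which is why the two halves of your proposal seem to contradict each other. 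Equivalently, your first attempt goes through at once with the correct middle factor: $\bs X^{-1}\bs M^{-1}\bs X^{-\top}=\bs I$ gives $\bs R^\top\bs R=\bs X^{-\top}(\bs I+\gamma\bs\Lambda_S)^2\bs X^{-1}$.

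Drop the lumped-mass fallback. It tries to rescue $\bs K_{\gamma_\text{min}}\bs M^{-2}\bs K_{\gamma_\text{min}}\preceq\bs K_\gamma\bs M^{-2}\bs K_\gamma$, but a diagonal, non-scalar $\bs M$ still does not commute with $\bs S$. The cross term $\bs S\bs M^{-1}+\bs M^{-1}\bs S$, a symmetrized product of positive semidefinite matrices, can be indefinite (e.g.\ $\bs M^{-1}=\mathrm{diag}(1,\epsilon)$ with $\bs S$ the $2\times 2$ all-ones matrix), so no monotonicity follows.

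Once the convention is fixed, the paper's proof is the short version of your corrected argument. It expands $\bs R^\top\bs R=\bs M+2\gamma\bs L+\gamma^2\bs L\bs M^{-1}\bs L$ (its $\bs L$ is your $\bs S$). It then notes that $\bs R^\top\bs R-\hat{\bs R}^\top\hat{\bs R}=2(\gamma-\gamma_\text{min})\bs L+(\gamma^2-\gamma_\text{min}^2)\bs L\bs M^{-1}\bs L\succeq 0$, with no diagonalization or change of coordinates. Your spectral route costs that change of coordinates but yields slightly more: the explicit singular values $(1+\gamma_\text{min}\mu_j)/(1+\gamma\mu_j)$ of $\bs B$.
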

\begin{proof}
    In a finite element space with mass matrix $\bs M$ and discretization $\bs L$ of $-\Delta$, 
    \begin{equation*}
        \bs Q_\textup{pr} = \delta^2 \bs M^{-1} \bs R^\top \bs R = \bs M^{-1} \delta (\bs M + \gamma \bs L) \bs M^{-1} \delta (\bs M + \gamma \bs L)
    \end{equation*}
    and thus $\bs R^\top \bs R = \bs M + 2 \gamma \bs L + \gamma^2 \bs L \bs M^{-1} \bs L$.
    Analogously, $\hat{\bs R}\mathstrut^\top \hat{\bs R} = \bs M + 2 \gamma_\text{min} \bs L + \gamma_\text{min}^2 \bs L \bs M^{-1} \bs L$.
    The condition \eqref{eq:R-cond} then follows directly from the fact that $\gamma_\text{min} \leq \gamma$ for every $\bs \theta$ in the support of the hyperprior, and that $\bs M$ and $\bs L$ are both symmetric positive semi-definite.
\end{proof}

\subsection{Unpreconditioned precision update (UP)}\label{sec:unprecond}
The arguably simplest approach to the problem of repeated computation is to remove the prior preconditioning entirely. As a consequence, the rank required for an adequate approximation is likely higher and may only be bounded by the number of observations. Nonetheless, in cases where there is no natural candidate for the weakest reference prior, removing the preconditioning may be a necessary alternative.

A direct unpreconditioned approximation $\bs A^\sharp \bs P \bs A \approx \Wr \bs \Lambda_r \Wr^\diamond$ can be seen as the limit of the WP approach as the reference prior becomes weaker. In particular, it is equivalent to preconditioning with a reference ``prior" with precision $\hat{\bs Q}_\text{pr} = \bs I$, such that $\hat{\bs R}\mathstrut^\top \hat{\bs R} = \bs M$, since
\begin{equation*} 
    (\hat{\bs R}\mathstrut^\sharp)^{-1} \bs A^\sharp \bs P \bs A \hat{\bs R}\mathstrut^{-1}
    = \hat{\bs V} \hat{\bs \Lambda} \hat{\bs V}\mathstrut^\top
    \iff 
    \bs A^\sharp \bs P \bs A 
    = (\hat{\bs R}\mathstrut^{-1} \hat{\bs V}) \hat{\bs \Lambda} (\hat{\bs R}\mathstrut^{-1} \hat{\bs V} )^\diamond
    = \bs W \hat{\bs \Lambda} \bs W^\diamond,
\end{equation*}
where $\bs W := \hat{\bs R}\mathstrut^{-1} \hat{\bs V}$. Although the identity $\bs I$ is not a valid prior precision in the infinite dimensional limit, the procedure for approximating the posterior and its use in evaluating $\pi(\bs \theta |\bs y)$ remain unchanged. In particular, the covariance and log determinant ratio can be computed as in \eqref{eq:pr_precon_approx_cov} and \eqref{eq:pr_precon_det_ratio_approx}. 

This equivalence suggests that if \cref{eq:R-cond} is satisfied, we can use \cref{thm:weakest_precon} to estimate the rank $r$ needed for a given level of accuracy in the posterior covariance approximation. In the Mat\'ern kernel example, \cref{eq:R-cond} follows trivially from \cref{lem:matern} since $\hat{\bs Q}_\text{pr} = \bs I$ is exactly the Mat\'ern prior precision with $\gamma = 0$. More generally, in finite dimensions, it is always possible to choose $\delta$ in \eqref{eq:pr_def} to be sufficiently small such that the condition is satisfied, but doing so could arbitrarily inflate the rank $r$ at which $\hat{\lambda}_{r+1} \ll \sigma_\text{min}^2 \delta_\text{min}^2$. To avoid this issue, for general priors we define $\delta^2 = \lambda_\text{min}(\bs Q_\text{pr})$, the smallest eigenvalue of $\bs Q_\text{pr}$, so that $\lambda_\text{min}(\bs R^\sharp \bs R) = 1$. 
This choice of $\delta^2$ remains bounded for any discretization of an infinite-dimensional prior $Q_\text{pr}$ and will, in fact, converge as
$\lambda_\text{min}(Q_\text{pr})>0$ as the discretization is refined.
This follows since the covariance operator $Q_\text{pr}^{-1}$ is trace-class.
With this definition of $\delta$, we can apply \cref{thm:weakest_precon} since any $\bs R^\sharp \bs R$ has eigenvalues $\geq 1$, so $\bs I \preceq \bs R^\sharp \bs R$ in $\R^{n \times n}_M$ and $\hat{\bs R}\mathstrut^\top \hat{\bs R} = \bs M \preceq \bs R^\top \bs R$ in $\R^{n \times n}$.

\subsection{Comparison of computational complexity} \label{sec:complexity-comparison}
We assumed in \cref{sec:complexity} that the computational cost of evaluating $\pi(\bs \theta|\bs y)$ is primarily dominated by forward/adjoint evaluations of the parameter-to-observable maps $\bs A$ and $\bs A^\sharp$, and secondarily by solves with, and applications of, the prior precision $\bs Q_\text{pr}$.
In \cref{tab:complexity}, we show a comparison of the number of operations needed to compute $\pi(\bs \theta|\bs y)$ for $N$ different hyperparameters $\bs \theta$ using the PP approach from \cref{sec:prior-precon-update}, the WP approach from \cref{sec:weakest}, and the UP approach from \cref{sec:unprecond}.
We treat a solve or application of the reference prior $\hat{\bs Q}_\text{pr}$ as equivalent to one with $\bs Q_\text{pr}$, because a natural way to define a reference prior is to take $\bs Q_\text{pr}$ for a specific choice of $\bs \theta$, as in the Mat\'ern GRF with $\gamma_\text{min}$ from \cref{lem:matern}. In addition to these counts, we include in the fourth row of \cref{tab:complexity} the cost of Gram-Schmidt orthogonalization, which is required in the randomized eigensolver. This step is the only operation with complexity $O(r^2)$ and therefore has
comparable cost as prior precision solves and applications. For simplicity, we neglect any reduction in $r$ performed in the online phase of WP and UP, and instead treat $r$ as fixed.
\begin{table}[t]
    \caption{Complexity comparison of PP, WP and UP methods from \cref{sec:prior-precon-update,sec:weakest,sec:unprecond} for $N$ evaluations of $\pi(\bs \theta|\bs y)$. The complexity is given in terms of forward and adjoint PDE solves (first row), solves with the prior or reference prior precision (second row), applies of the precision (third row), and Gram-Schmidt cost (fourth row). To emphasize that the required ranks of the approximations differ, they are denoted by $r$, $\hat{r}$, and $\bar{r}$ for each of the methods, respectively. In our implementation, the padding fraction $p$ is $1/2$ and the number of CG iterations $\ell$ until convergence is typically 2 or 3.}
    \centering
    \resizebox{\textwidth}{!}{%
    \begin{tabular}{c|c|c|c}
         & PP & WP & UP \\
        \hline
        $\bs A, \bs A^\sharp$ applies & $N(2(1+p)r + 2 \ell) + 1$ & $2 N \ell + 2(1+p)\hat{r} + 1$ & $2 N \ell + 2(1+p)\bar{r} + 1$ \\
        \hline
        $\bs Q_\text{pr}$ solves & $N((1+p)r + \ell)$ & $N(\hat{r}+\ell) + (1+p)\hat{r}$ & $N(\bar{r} + \ell)$ \\
        \hline
        $\bs Q_\text{pr}$ applies & $N(3(1+p)r + \ell + 1)$ & $N((4+3p)\hat{r} + \ell + 1) + 3(1+p)\hat{r}$ & $N(3(1+p)\bar{r} + \ell + 1)$ \\
        \hline
        G-S cost & $O(Nr^2)$ & $O((N+1)\hat{r}^2)$ & $O((N+1)\bar{r}^2)$ \\
        \hline
    \end{tabular}}
    \label{tab:complexity}
\end{table}

Two steps of the computation account for all the costly operations: (1) finding the low rank approximation, and (2) solving for the posterior mean $\bs \mu_\text{post}$.
In the first stage of each method, we find an approximation of the form \eqref{eq:pr_precon_approx} by solving a generalized eigenvalue problem.For a target rank $r$ and an oversampling fraction $p$ (here set to $1/2$), the algorithm outputs the leading $r$ eigenvalues and eigenvectors. This requires $(1+p)r$ applications of $\bs A^\sharp \bs P \bs A$ to random vectors, which corresponds to $2(1+p)r$ applications of $\bs A$ or $\bs A^\sharp$, as well as $(1+p)r$ linear solves with $\bs Q_\text{pr}$ or $\hat{\bs Q}_\text{pr}$. The generalized eigenvectors are orthogonalized in the $\bs Q_\text{pr}$-weighted inner product using a modified Gram-Schmidt method, \cite[Alg.~3]{saibaba2016randomized}, which performs an additional $3(1+p)r$ applications of $\bs Q_\text{pr}$ or $\hat{\bs Q}_\text{pr}$ and $O(r^2)$ inner products. In PP, this computation is performed for each $\bs \theta$, while in WP and UP, it is performed once as an ``offline'' precomputation and for each $\bs\theta$ during the re-orthogonalization. In UP, there are no prior solves and applies in the offline stage as $\hat{\bs Q}_\text{pr} = \bs I$.  WP and UP require the additional step of reorthogonalizing the columns of $\bs B \Vhatr$ with respect to $\bs Q_\text{pr}(\bs \theta)$ for each $\bs \theta$. This does not require any additional applications of $\bs A$ or $\bs A^\sharp$. The orthogonalization requires applying $\hat{\bs Q}_\text{pr}$ to $r$ vectors and then solving a generalized eigenvalue problem, which in total requires $(1+p)r$ linear solves, $3(1+p)r$ applications of $\bs Q_\text{pr}$, and $O(r^2)$ inner products.

The second costly stage of each method is the computation of the posterior mean $\bs \mu_\text{post} = \bs Q_\text{post}^{-1} (\bs Q_\text{pr} \bs \mu_\text{pr} + \bs A^\sharp \bs Q_{\bs \varepsilon} \bs y)$ for each $\bs\theta$. Substituting in the approximation $\bs Q_{\text{post},r}^{-1}$ introduces a nonzero error $e_2(r,\bs \theta)$, which, as previously discussed, may be difficult to bound a priori. Thus, generally we use a CG solver with $\bs Q_{\text{post},r}$ as a preconditioner. 
Each CG iteration requires one matrix-vector product with the exact posterior precision, amounting to the application of $\bs A$, $\bs A^\sharp$, and $\bs Q_\text{pr}$, and one solve with $\bs Q_\text{pr}$ in the preconditioner. Finally, computing $\bs Q_\text{pr} \bs \mu_\text{pr} + \bs A^\sharp \bs Q_{\bs \varepsilon} \bs y$ requires one application of $\bs Q_\text{pr}$ and one of $\bs A^\sharp$, the latter of which can be precomputed.

An overview of the total operation counts is given in \cref{tab:complexity}, where $\ell$ is the number of CG iterations. Importantly, although all methods use $O(N)$ forward and adjoint applications, WP and UP remove the leading $O(Nr)$ term that arises from repeatedly recomputing the low-rank approximation. For each method, the number of $\bs Q_\text{pr}$ solves and applications is also $O(Nr)$, with WP and UP requiring slightly fewer solves than PP, but WP needing somewhat more applications. Although the ranks for WP and UP are expected to be larger for a fixed target accuracy, for moderate to large $N$ both are more efficient than PP thanks to the precomputation of the low-rank approximation. 
In cases where $e_2(r,\bs \theta)$ is small even when $\bs Q_{\text{post},r}$ is used to compute $\bs\mu_\text{post}$ and thus CG iterations are not necessary, the advantage of this amortization is even greater: with $\ell = 0$, all dependence on $N$ in the number of $\bs A, \bs A^\sharp$ applies is removed from WP and UP. An example of such a case is presented in \cref{sec:adv_diff}.

\section{Numerical example: initial condition inference in advection-diffusion}
\label{sec:adv_diff}
We consider the problem of inferring the initial condition of an advection-diffusion equation from later-time observations, with uncertain hyperparameters in the prior and noise covariance. On a domain $\Omega\subset \mathbb R^d$, $d\in \{2,3\}$, let $u(x,t)$ be the solution to
\begin{subequations}\label{eq:adv-diff}
\begin{alignat}{2}
    u_t - \kappa \Delta u + \bs v \cdot \nabla u &= 0 && \text{ in } \: \Omega \times [0,T],\\
    u(x,0) &= m(x)\: && \text{ in } \: \Omega,\\
    \kappa \nabla u \cdot \bs n &= 0 && \text{ on } \: \partial \Omega.
\end{alignat}
\end{subequations}
Here $\bs v$ and $\kappa>0$ denote a given advection velocity field and the diffusion coefficient, and $\bs n$ denotes the unit outward normal vector on the boundary $\partial \Omega$.
The parameter $\bs m$ is the finite element discretization of the initial condition $m(x)$, and the observed data is $\bs y\in \R^q$ with components
\begin{equation*}
    y_i = u(x_i,t_i) + \varepsilon_i, \quad i=1,\ldots,q
\end{equation*}
for observation points and times $(x_i,t_i)$ and additive Gaussian noise $\bs \varepsilon \sim \mathcal N(\bs 0,\sigma^{2} \bs I)$. Because the solution of the advection-diffusion equation depends linearly on its initial condition, the parameter-to-observable map $\bs A$ is also linear (or affine in the case of non-zero right hand sides in \eqref{eq:adv-diff}). The prior distribution on $\bs m$ is given by the discretization of a mean zero Mat\'ern Gaussian random field with covariance operator $\delta^{-2} ( I - \gamma  \Delta )^{-2}$ combined with Robin boundary conditions with a constant, empirically derived coefficient to mitigate boundary artifacts \cite{DaonStadler18, villa_hippylib_2018}. Note that this boundary condition limits the admissible options for the prior mean, because non-zero constant functions generally do not satisfy the Robin boundary condition and therefore do not belong to the Cameron–Martin space. Choosing a zero prior mean circumvents these complications.
The hyperparameter vector is $\bs \theta = [\gamma, \delta, \sigma]^\top$, consisting of the two parameters $\gamma$ and $\delta$, which enter the prior precision and the noise standard deviation $\sigma$, each with an independent uniform hyperprior.

Our implementation uses the hIPPYlib Python package \cite{villa_hippylib_2018}, which builds on the FEniCS finite element library \cite{logg2012automated}. We adapt an example in hIPPYlib to incorporate the hyperparameters and implement the matrix-free linear algebra methods and solvers required for the proposed methods.
Unless otherwise specified, $\Omega$ is discretized using an unstructured triangular or tetrahedral mesh. We use linear $P_1$ finite elements for the initial condition parameter $\bs m$, and unless otherwise specified, quadratic $P_2$ finite elements for $u$ in \eqref{eq:adv-diff}. $L^2$-projections are used to relate the initial condition discretization $\bs m$ to the piecewise quadratic initial condition used in \eqref{eq:adv-diff}. For time integration, we apply the backward Euler method with uniformly spaced time steps, and the advection term in \eqref{eq:adv-diff} is stabilized using a Galerkin least squares approach.
For more details on \eqref{eq:adv-diff}, its discretization, and the derivation of the adjoint $\bs A^\sharp$, we refer to \cite{villa_hippylib_2018}.

\subsection{Two-dimensional domain}
\label{sec:2D_adv_diff}
On the left of \cref{fig:dim_indep}, we show the domain $\Omega$ along with the velocity field $\bs v$, obtained by solving the Navier–Stokes equations with no-slip boundary conditions imposed on the top, bottom, and inner boundaries, and unit tangential velocity Dirichlet data on the left and right boundaries in the upward and downward directions, respectively. We set $\kappa=\num{1e-3}$ and place four equally spaced points along each edge of every rectangular cut-out, giving a total of 32 spatial sensor positions. Measurements are taken every 0.4 seconds from $t=2.4$ to $t=4.0$, resulting in the observation vector $\bs y\in \R^{160}$.

\begin{figure}[t]
\centering
    \begin{minipage}{0.37\textwidth}
    \centering
    \includegraphics[width=\textwidth]{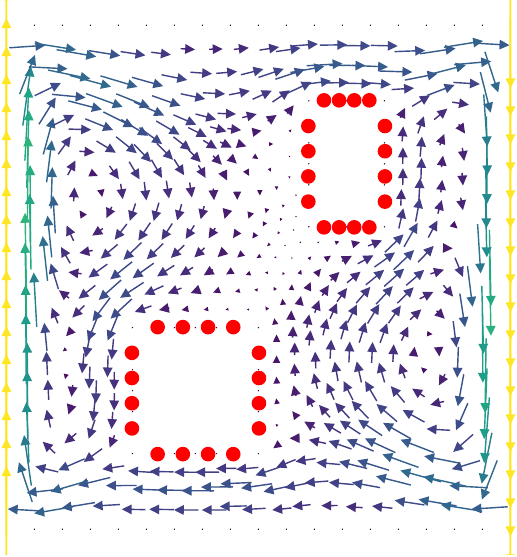}
    \end{minipage}
    \hspace{1ex}
    \begin{minipage}{0.6\textwidth}
    \centering
    \begin{tikzpicture}
  \begin{axis}[
  height=6cm,
  width=8.5cm,
  xmin=0,
  xmax=50,
  ymin=1e-2,
  ymax=1e5,
  ymode=log,
  compat=1.3,
  xlabel={$i$},
  ylabel={$\lambda_i$},
  legend pos=north east,
  legend cell align=left,
  legend style={font=\small}
]  
     \addplot[dashed, color=red, very thick, mark = none]
	table[x expr={\thisrow{r}},y expr={\thisrow{605linear}}] {data/spectra_dim_indep.txt};  
    \addplot[color=red, very thick, mark = none]
	table[x expr={\thisrow{r}},y expr={\thisrow{605quadratic}}] {data/spectra_dim_indep.txt};
    \addplot[dashed, color=red!40!blue, very thick, mark = none]
	table[x expr={\thisrow{r}},y expr={\thisrow{2363linear}}] {data/spectra_dim_indep.txt};  
    \addplot[color=red!40!blue, very thick, mark = none]
	table[x expr={\thisrow{r}},y expr={\thisrow{2363quadratic}}] {data/spectra_dim_indep.txt};
    \addplot[dashed, color=cyan, very thick, mark = none]
	table[x expr={\thisrow{r}},y expr={\thisrow{5443linear}}] {data/spectra_dim_indep.txt};
    \addplot[color=cyan, very thick, mark = none]
	table[x expr={\thisrow{r}},y expr={\thisrow{5443quadratic}}] {data/spectra_dim_indep.txt};
    \legend{{mesh 1, $P_1$}, {mesh 1, $P_2$}, {mesh 2, $P_1$}, {mesh 2, $P_2$}, {mesh 3, $P_1$}, {mesh 3, $P_2$}}; %
    \end{axis}
  \end{tikzpicture}
  \end{minipage}
  \caption{Left: Velocity field $\bs v$ (arrows) and observation points (in red) for the 2D advection-diffusion problem. Right: Spectra of the WP low rank approximation ($\gamma = 0.0015$) for multiple finite element discretizations. Dashed lines indicate that linear finite ($P_1$) elements are used for $u$ in \eqref{eq:adv-diff}, and solid lines indicate quadratic ($P_2$) elements. Meshes 1, 2, and 3 have 1088, 4472, and 10488 elements, respectively.}
  \label{fig:dim_indep}
\end{figure}

For the time discretization, 80 implicit time steps are used. To study different spatial discretizations and numerically verify mesh-converged behavior, we show the spectra of the WP precision update for three successively refined meshes on the right side of \cref{fig:dim_indep}, using either linear ($P_1$) or quadratic ($P_2$) Lagrange elements for \eqref{eq:adv-diff}. The figure indicates that $P_1$ elements introduce some error, likely caused by numerical diffusion, whereas $P_2$ elements on a mesh with 4472 elements yield numerically well converged leading eigenvalues. This justifies our use of $P_2$ elements for \eqref{eq:adv-diff}. For the following experiments, we thus use the mesh consisting of 4472 triangles, resulting in 2363 unknowns for $\bs m$ and 9199 spatial unknowns for $u$ in \eqref{eq:adv-diff}. 

The true initial condition used for generating synthetic observation data is
\begin{equation*}
    m(x_1, x_2) = \min(0.5, \exp(-100 ((x_1-0.35)^2 + (x_2-0.7)^2))),
\end{equation*}
and the true value of the noise standard deviation is $\sigma = 0.01$. \cref{fig:forward} shows snapshots of $u$ at different times. The hyperparameters, which are treated as nuisance parameters in this problem, are assigned independent uniform hyperpriors on the intervals $\gamma \in [0.0015, 10]$, $\delta \in [1, 100]$, $\sigma \in [0.003, 0.1]$.
Next, we study the approximation properties and computational times of the proposed methods, and then demonstrate our procedure for computing the distribution of a quantity of interest in \cref{sec:QoI}.


\begin{figure}[t]
\centering
\begin{tikzpicture}
\node at (-2,2.6) {$t=0$};
\node at (1,2.6) {$t=0.5$};
\node at (4,2.6) {$t=2$};
\node at (7,2.6) {$t=4$};
\node at (-2,1) {\includegraphics[width=0.18\linewidth]{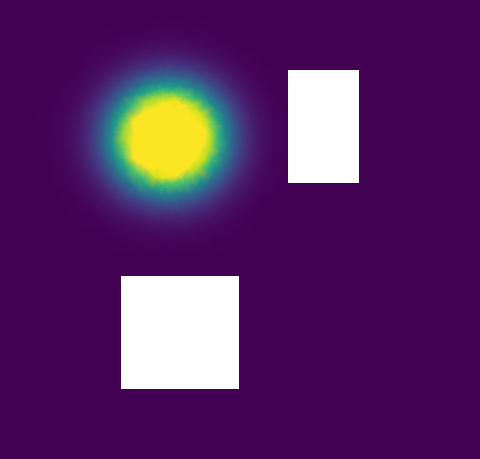}};
\node at (1,1) {\includegraphics[width=0.18\linewidth]{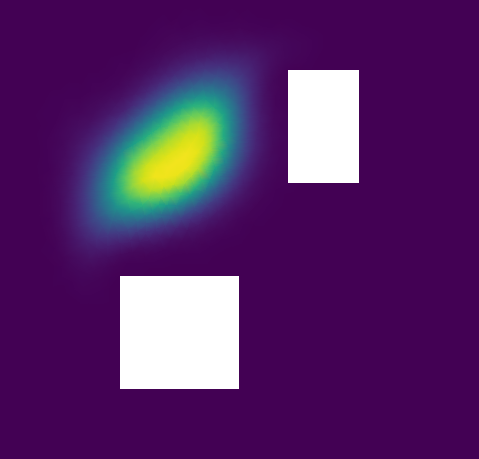}};
\node at (4,1) {\includegraphics[width=0.18\linewidth]{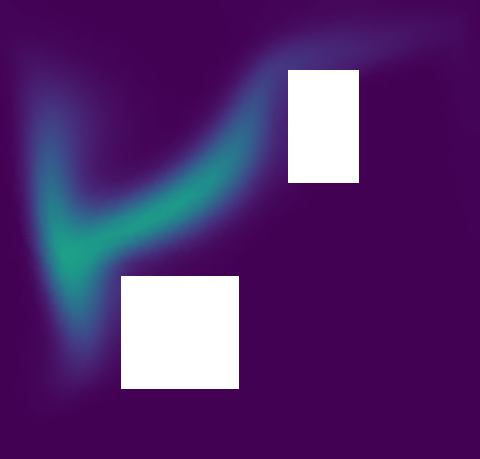}};
\node at (7,1) {\includegraphics[width=0.18\linewidth]{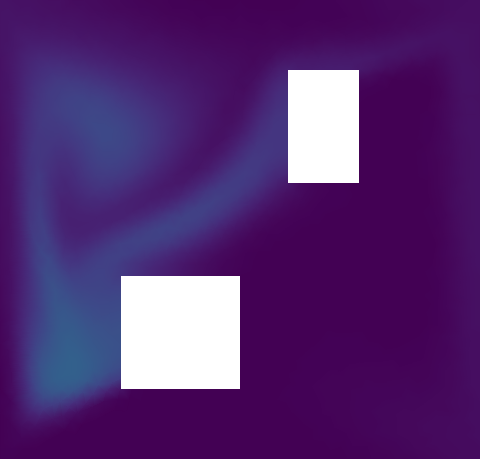}};
\node at (9,1) {\includegraphics[width=0.03\linewidth]{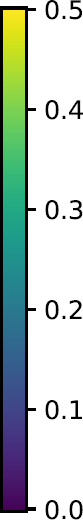}};
\begin{scope}[shift={(-3.35, -0.29)}, x=2.7cm, y=2.6cm]
    \draw[red, very thick] (0.2, 0.7) rectangle (0.4, 0.9);
\end{scope}
\end{tikzpicture}
\label{fig:forward}
\caption{The forward solution of the advection diffusion equation in 2D, at selected times. The quantity of interest $q(\bs m)$ is the average of the initial condition inside the red square.}
\end{figure}

\subsubsection{Comparison of low rank approximations}
\begin{figure}[t]
\begin{minipage}{0.60\textwidth}
    \centering
    \begin{tikzpicture}
\begin{axis}[
    height=6cm, width=8.7cm,
    xmin=0,
    xmax=120,
    ymin=1e-9,
    ymax=2000,
    compat=1.3,
    xlabel = {$i$},
    ylabel = {$\lambda_i$},
    ymode=log,
    legend pos= north east,
    legend cell align=left,
    legend style={font=\small}
]
    \addplot[color=blue, very thick, mark = none, name path global = unprecon]
	table[x expr={\thisrow{r}},y expr={\thisrow{unprecon}}] {data/spectra.txt};
     \addplot[color=orange, very thick, mark = none, name path global = weakest]
	table[x expr={\thisrow{r}},y expr={\thisrow{weakest}}] {data/spectra.txt};
    \addplot[color=C1, very thick, mark = none, name path global = prior1]
	table[x expr={\thisrow{r}},y expr={\thisrow{prior1}}] {data/spectra.txt};
    \addplot[color=C2, very thick, mark = none, name path global = prior2]
	table[x expr={\thisrow{r}},y expr={\thisrow{prior2}}] {data/spectra.txt};
    \addplot[color=C3, very thick, mark = none, name path global = prior3]
	table[x expr={\thisrow{r}},y expr={\thisrow{prior3}}] {data/spectra.txt};
    \draw [dashed, C1, name path = cutoff_1] (axis cs:1,2.5e-3) -- (axis cs:200,2.5e-3);
    \draw [dashed, C2, name path = cutoff_2] (axis cs:1,6.25e-4) -- (axis cs:200,6.25e-4);
    \draw [dashed, C3, name path = cutoff_3] (axis cs:1,1.5625e-4) -- (axis cs:200,1.5625e-4);
    \draw [dashed, black, name path = cutoff_min] (axis cs:1,9e-8) -- (axis cs:200,9e-8);
    \path [name intersections={of=unprecon and cutoff_min, by=unpre_min}];
    \path [name intersections={of=unprecon and cutoff_1, by=unpre_1}];
    \path [name intersections={of=unprecon and cutoff_2, by=unpre_2}];
    \path [name intersections={of=unprecon and cutoff_3, by=unpre_3}];
    \draw[dashed] (unpre_min) -- (unpre_min |- current axis.south);
    \draw[dashed, C1] (unpre_1) -- (unpre_1 |- current axis.south);
    \draw[dashed, C2] (unpre_2) -- (unpre_2 |- current axis.south);
    \draw[dashed, C3] (unpre_3) -- (unpre_3 |- current axis.south);
    \path [name intersections={of=weakest and cutoff_min, by=weak_min}];
    \path [name intersections={of=weakest and cutoff_1, by=weak_1}];
    \path [name intersections={of=weakest and cutoff_2, by=weak_2}];
    \path [name intersections={of=weakest and cutoff_3, by=weak_3}];
    \draw[dashed] (weak_min) -- (weak_min |- current axis.south);
    \draw[dashed, C1] (weak_1) -- (weak_1 |- current axis.south);
    \draw[dashed, C2] (weak_2) -- (weak_2 |- current axis.south);
    \draw[dashed, C3] (weak_3) -- (weak_3 |- current axis.south);
    \path [name intersections={of=prior1 and cutoff_1, by=pr1_1}];
    \path [name intersections={of=prior2 and cutoff_2, by=pr2_2}];
    \path [name intersections={of=prior3 and cutoff_3, by=pr3_3}];
    \draw[dashed, C1] (pr1_1) -- (pr1_1 |- current axis.south);
    \draw[dashed, C2] (pr2_2) -- (pr2_2 |- current axis.south);
    \draw[dashed, C3] (pr3_3) -- (pr3_3 |- current axis.south);
    \legend{UP, WP, {PP, $\bs \theta_1$}, {PP, $\bs \theta_2$}, {PP, $\bs \theta_3$}};
    \end{axis}
  \end{tikzpicture}
  \end{minipage}
\begin{minipage}{0.39\textwidth}
    \centering
    \begin{tikzpicture}
    \begin{axis}[height=6cm, width=6.1cm,
        xmin=10,
        xmax=70,
        ymin=1e-4,
        ymax=1000,
        compat=1.3,
        xlabel = {$r$},
        ylabel = {error},
        ymode=log,
        legend pos= north east,
        legend cell align = left,
        legend style={font=\small}]   
    \addplot[color=blue, very thick, mark = none, name path global = unprecon]
	table[x expr={\thisrow{r}},y expr={\thisrow{e1_un}}] {data/log_pi_error.txt};
     \addplot[color=orange, very thick, mark = none, name path global = weakest]
	table[x expr={\thisrow{r}},y expr={\thisrow{e1_weak}}] {data/log_pi_error.txt};
    \addplot[color=C3, very thick, mark = none, name path global = prior]
	table[x expr={\thisrow{r}},y expr={\thisrow{e1_pr}}] {data/log_pi_error.txt};
    \addplot[color=blue, dashed, very thick, mark = none, name path global = unprecon2]
	table[x expr={\thisrow{r}},y expr={\thisrow{e2_un}}] {data/log_pi_error.txt};
     \addplot[color=orange, dashed, very thick, mark = none, name path global = weakest2]
	table[x expr={\thisrow{r}},y expr={\thisrow{e2_weak}}] {data/log_pi_error.txt};
    \addplot[color=C3, dashed, very thick, mark = none, name path global = prior2]
	table[x expr={\thisrow{r}},y expr={\thisrow{e2_pr}}] {data/log_pi_error.txt};
    \legend{$e_1$ UP, $e_1$ WP, $e_1$ PP, $e_2$ UP, $e_2$ WP, $e_2$ PP};
    \end{axis}
  \end{tikzpicture}
  \end{minipage}
  \label{fig:spectra_error}
  \caption{Left: Comparison of spectra of each of the three low rank approximations in the 2D advection-diffusion problem. UP (blue) corresponds to $\gamma = 0$, and WP (orange) corresponds to $\gamma_\text{min} = 0.0015$. PP is shown for three representative values, $\bs \theta_1 = [0.003, 50, 0.01]^\top$, $\bs \theta_2 = [0.0075, 25, 0.01]^\top$, and $\bs \theta_3 = [0.015, 12.5, 0.01]^\top$. The dashed black line indicates the cutoff $\lambda_i < 10^{-2} \sigma_\text{min}^2 \delta_\text{min}^2$. The green dashed lines indicate the cutoffs $\lambda_i < 10^{-2} \sigma^2 \delta^2$ for each of the $\bs \theta$'s.
  Right: Errors $e_1(r,\bs \theta)$ (solid) and $e_2(r,\bs \theta)$ (dashed) as a function of rank $r$, for UP (blue), WP (orange) and PP (green) methods, where $\bs \theta = \bs \theta_3$.}
\end{figure}
We now compare the approximation methods PP, WP and UP numerically.
A key distinction between the methods lies in how quickly their eigenvalues decay and the corresponding ranks they require. \cref{fig:spectra_error} (left) shows the spectra of the preconditioned precision update for WP and UP, together with PP spectra for representative values $\bs \theta_1$, $\bs \theta_2$, $\bs \theta_3$ selected from the high probability region of $\pi(\bs \theta|\bs y)$ (see \cref{fig:quadrature}). With $\delta^2$ factored out as in \eqref{eq:pr_def}, and for our particular choice of Mat\'ern kernel prior, $\bs R$ depends only on $\gamma$, so the preconditioned update depends only on the $\gamma$ used in the preconditioning.
The cutoffs from \cref{thm:weakest_precon}, with a tolerance factor of $10^{-2}$, are shown as dashed horizontal lines, with vertical lines indicating the ranks at which the spectra drop below the cutoff. The ranks are also listed in \cref{tab:timing} below.
This figure highlights the advantage gained by preconditioning, with smoother preconditioning priors (higher $\gamma$) corresponding to faster decay in the eigenvalues of the preconditioned update, and therefore lower rank approximations. 

The effects of the faster eigenvalue decay on the accuracy of the approximation are shown on the right in \cref{fig:spectra_error}. Choosing the target value $\bs \theta = \bs \theta_3$, one of the reference points from the left plot, we show the two error components $e_1(r,\bs\theta)$ and $e_2(r,\bs \theta)$ defined in \eqref{eq:e1} and \eqref{eq:e2}
as a function of the approximation rank $r$.
The truth, for the purpose of this error metric, is approximated using PP with CG and rank $r = 250$. Since the update has a rank bounded by the number of observations, 160, this incurs no truncation error. For each method, a decomposition is computed and then truncated to each successive rank to calculate the error. As expected, both errors decrease faster with stronger preconditioning, and slowest with no preconditioning, due to the relative decay rate of the eigenvalues, though the difference in $e_1$ between the methods is small. This indicates that \eqref{eq:wk_precon_error} is a conservative error bound for WP in this problem. Additionally, for moderately large ranks, including those determined by the cutoffs in the left figure, the total error is dominated by $e_1$, indicating that CG is not necessary for an accurate approximation of $\pi( \bs \theta|\bs y)$ in this problem. The choice of $\bs \mu_\text{pr}=0$ is likely significant here, eliminating the first term in $\bs g = \bs Q_\text{pr} \bs \mu_\text{pr} + \bs A^\sharp \bs Q_{\bs \varepsilon} \bs y$, whose norm controls $e_2$.

\cref{tab:timing} displays the ranks for each method derived from \cref{fig:spectra_error}, together with a timing comparison of the three methods. Though CG is not necessary to compute $\bs \mu_\text{post}$ in this problem, $e_2$ may be significant in other problems, particularly if $\bs \mu_\text{pr} \neq \bs 0$. Thus, we present timings with and without CG. The timings include both the required precomputations of the low rank approximation, which is repeated for each $\bs \theta$ in PP, and evaluation.  In principle, the rank could be determined adaptively, but for the purposes of these timings, the approximation was computed to rank $r=50$ and then truncated to the smallest $i$ satisfying the $\lambda_i < 10^{-2} \sigma^2 \delta^2$ cutoff. The fourth row shows a typical value of this rank, for the specific case of $\bs \theta = \bs \theta_3$, compared in the last row to the rank needed to obtain error $e(r,\bs \theta_3)$ below $10^{-2}$. For WP and UP, the low rank approximations were precomputed to the rank determined by the minimum cutoff, shown in the third row, and then truncated for each $\bs \theta$ to the higher cutoff, with typical rank values shown in the fourth row. 
Despite the increase in rank in WP and UP, both achieve a significant speedup relative to PP by avoiding recomputation of the low rank approximation. The speedup is particularly pronounced when no CG is used, since this fully eliminates $\bs A, \bs A^\sharp$ applies from the evaluation phase of WP and UP. While UP avoids some of the applies and solves with $\bs Q_\text{pr}$ required by WP (see \cref{tab:complexity}), the higher rank of the approximation ultimately results in a slightly slower method. Thus, when a ``weakest'' reference prior is available, WP is the fastest.

\begin{table}[t]
    \caption{Timing and rank comparison for PP, WP and UP applied to the 2D advection-diffusion problem. The first row shows time in seconds for $N=100$ evaluations of $\pi(\bs \theta|\bs y)$, and the second row shows the same time if no CG is used in computing $\bs \mu_\textup{post}$. The remaining rows show the ranks used in each method, for varying choices of cutoff. In the third row, the rank is the minimum $i$ such that $\lambda_i < 10^{-2} \sigma_\textup{min}^2 \delta_\textup{min}^2$. No value is shown for PP, since $\bs \theta$ is known and the minimum cutoff is not used. The fourth row repeats this for $\lambda_i < 10^{-2} \sigma^2 \delta^2$, using $\bs \theta_3$ as a target value. The last row shows the smallest $r$ such that $e(r,\bs \theta_3) < 10^{-2}$.
    Timings were performed on a 2026 laptop equipped with an Intel Core Ultra 7 processor with 8 cores and 16 GB of RAM.}
    \centering
    \begin{tabular}{c|c|c|c}
         & PP & WP & UP \\
         \hline
         time for $N=100$ & 870 s & 79 s & 86 s \\
         \hline
         time for $N=100$, no CG & 806 s & 45 s & 57 s \\
        \hline \hline
        rank ($\sigma^2_\text{min}\delta^2_\text{min}$) & --- & 95 & 110 \\
        \hline
        rank ($\sigma^2\delta^2$) & 41 & 56 & 69 \\
        \hline
        rank (error) & 47 & 48 & 54 \\
        \hline
    \end{tabular}
    \label{tab:timing}
    
\end{table}

\subsubsection{Evaluation of the quantity of interest marginal}\label{sec:QoI}
Next, we focus on the computation of the distribution of a linear quantity of interest (QoI) using $\pi(\bs \theta|\bs y)$. 
The QoI $q(\bs m)$ we consider is the average of $m(x)$ in the square region $[0.2, 0.4]\times [0.7,0.9]$, indicated by the red square in \cref{fig:forward}. The goal is to compute the QoI distribution $\pi(q|\bs y)$, which is 
given by 
\begin{equation*}
\pi(q|\bs y) = \int \pi(q|\bs \theta, \bs y) \pi(\bs \theta|\bs y) \, d \bs \theta.
\end{equation*}
Since this is a three-dimensional integral over a unimodal distribution, it can be approximated well by simple quadrature. Following the approach of \cite{rue2009approximate}, we (1) find $\bs \theta^\ast = \arg \max_{\bs \theta} \pi(\bs \theta|\bs y)$ and then (2) find a Gaussian approximation at $\bs \theta^\ast$ using finite differencing. Then (3), we define a grid around $\bs \theta^\ast$ by taking steps in each direction scaled by the approximate standard deviation until $\pi(\bs \theta|\bs y)$ drops below a threshold, and (4) check each point in the grid and keep only those exceeding the probability threshold. Applying this procedure, which is commonly used in INLA, to the 2D advection-diffusion problem with steps of size one standard deviation and a probability threshold of 2.5 Gaussian standard deviations yields 39 such quadrature points. \cref{fig:quadrature} (left) shows contours of $\pi(\bs \theta|\bs y)$ with the quadrature points superimposed. This procedure evaluated $\pi(\bs \theta|\bs y)$ 231 times in the Nelder-Mead optimization (with starting point $\bs \theta_0 = [1, 10, 0.05]^\top$) and 75 times to find the quadrature points, so at least for $\bs \theta \in \R^3$, quadrature only a modest amount of evaluations beyond what empirical Bayes already requires.

Since the quantity of interest is linear and $\pi(\bs m | \bs \theta,\bs y)$ is Gaussian, $\bs \pi(q |\bs \theta, \bs y)$ is also Gaussian, so quadrature amounts to approximating the marginalized $\pi(q|\bs y)$ by a mixture of Gaussians. The outcome is shown in \cref{fig:quadrature} (right), compared to the empirical Bayes estimator $\pi(q|\bs \theta^\ast, \bs y)$ and two other conditional distributions with reference values of $\bs \theta$.
\begin{figure}[t]
    \begin{tikzpicture}
    \node at (0,0) {\includegraphics[width=.33\textwidth]{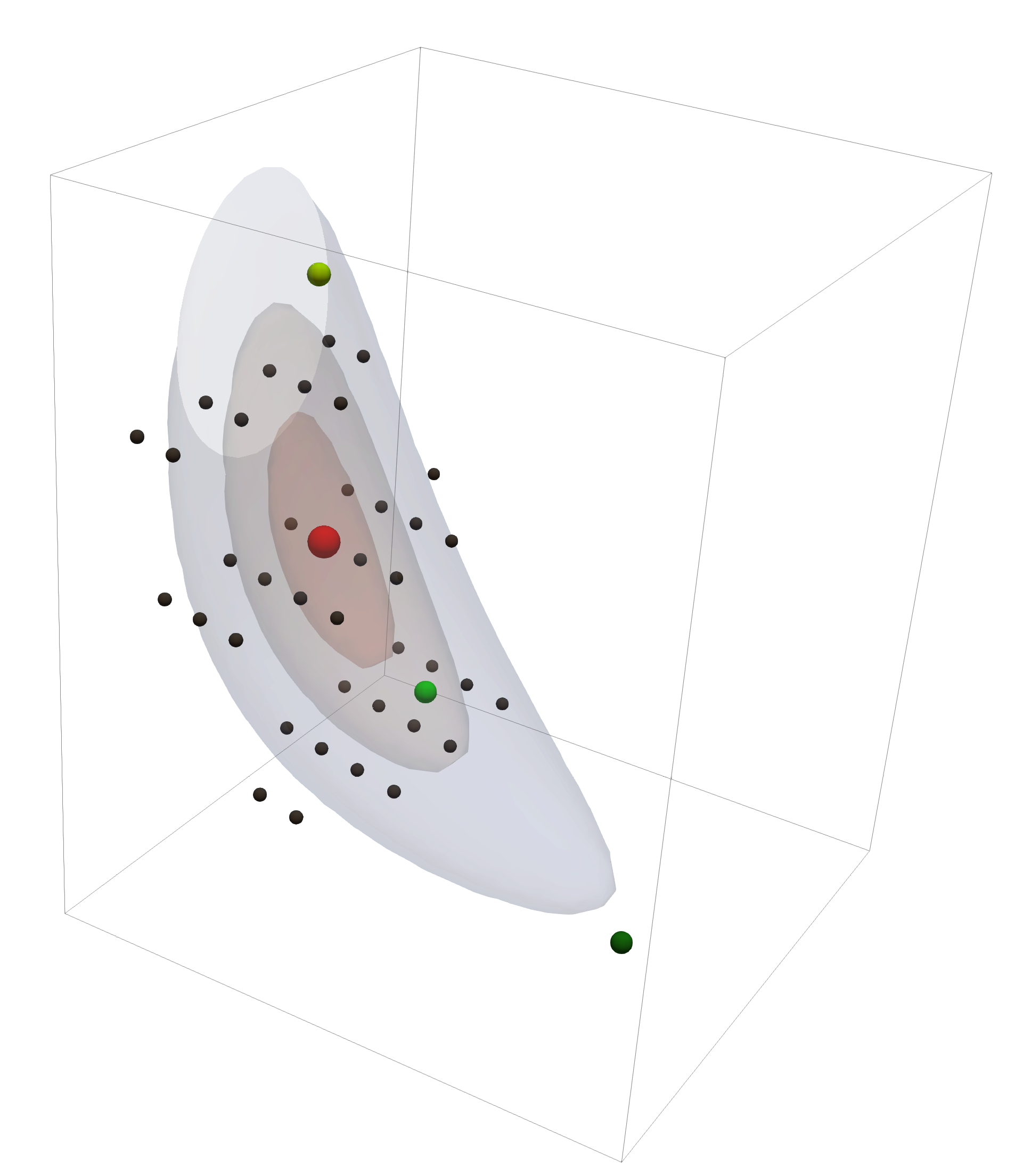}};    
    \coordinate (A) at (-2.25,2.01);
    \coordinate (B) at (-2.18,-1.59);
    \coordinate (C) at (0.54,-2.8);
    \coordinate (D) at (1.75,-1.29);
    \pgfmathsetmacro{\eps}{0.1}
    \node at (-3.1,0) {$\delta$};
    \pgfmathsetmacro{\mindel}{15}
    \pgfmathsetmacro{\maxdel}{60}
    \draw[black!80!blue] (B)--(A);
    \foreach \delta in {0.111111,0.5,0.888888}{
        \pgfmathsetmacro{\tickvalue}{(\maxdel-\mindel)*\delta+\mindel}
        \draw[black!80!blue]  ($(B)!\delta!(A) + \eps*(1,0)$) -- ($(B)!\delta!(A) - \eps*(1,0)$) node[anchor=east, font=\tiny] {\pgfmathprintnumber{\tickvalue}};
    }
    \node at (-1.4,-2.7) {$\gamma$};
    \pgfmathsetmacro{\mingam}{0.0015}
    \pgfmathsetmacro{\maxgam}{0.02}
    \draw[black!80!blue] (B)--(C);
    \foreach \gamma in {0.081081081,0.459459459,0.837837838}{
        \pgfmathsetmacro{\tickvalue}{(\maxgam-\mingam)*\gamma+\mingam}
        \draw[black!80!blue]  ($(B)!\gamma!(C) + \eps*(0.707,0.707)$) -- ($(B)!\gamma!(C) - \eps*(0.707,0.707)$) node[anchor=east, font=\tiny] {\pgfmathprintnumber[fixed, precision=4]{\tickvalue}};
    }
    \node at (1.9,-2.5) {$\sigma$};
    \pgfmathsetmacro{\minsig}{0.00085}
    \pgfmathsetmacro{\maxsig}{0.00120}
    \draw[black!80!blue] (C)--(D);
    \foreach \sigma in {1/7,0.44,5/7}{
        \pgfmathsetmacro{\tickvalue}{(\maxsig-\minsig)*\sigma+\minsig}
        \draw[black!80!blue]  ($(C)!\sigma!(D) + \eps*(-0.866,0.5)$) -- ($(C)!\sigma!(D) - \eps*(-0.866,0.5)$) node[anchor=west, font=\tiny] {\pgfmathprintnumber[fixed, precision=5]{\tickvalue}};
    }
    \end{tikzpicture}
\hspace{.4cm}
    \begin{tikzpicture}
    \begin{axis}[height=6cm, width=9cm,
        xmin=0.1,
        xmax=0.275,
        ymin=-1,
        ymax=14,
        compat=1.3,
        xlabel = {$q$},
        ylabel = {$\pi(q)$},
        legend pos= north west]   
     \addplot[color=C1, very thick, mark = none]
	table[x expr={\thisrow{q}},y expr={\thisrow{theta_1}}] {data/piQoI.txt};
    \addplot[color=C3, very thick, mark = none]
	table[x expr={\thisrow{q}},y expr={\thisrow{theta_3}}] {data/piQoI.txt};
    \addplot[color=red, very thick, mark = none]
	table[x expr={\thisrow{q}},y expr={\thisrow{theta_opt}}] {data/piQoI.txt};
    \addplot[color=black, very thick, mark = none]
	table[x expr={\thisrow{q}},y expr={\thisrow{marginalized}}] {data/piQoI.txt};  
    \legend{$\bs \theta_1$, $\bs \theta_3$, $\bs \theta^\ast$, $\pi(q|\bs y)$};
    \draw [dashed, black] (axis cs:0.2320159,-1) -- (axis cs:0.2320159,14); 
    \end{axis}
  \end{tikzpicture}
  \caption{Left: Contours of $\pi(\bs \theta|\bs y)$. Quadrature points are shown in black, $\bs \theta^\ast = \arg \max_{\bs \theta} \pi(\bs \theta|\bs y)=[0.00421, 34.1, 0.0102]^\top$ is shown in red, and the reference points $\bs \theta_1, \bs \theta_2$ and $\bs \theta_3$ are shown in light, medium, and dark green, respectively.
  Right: Posterior marginal and conditional distributions of a quantity of interest $q$. 
  The marginal $\pi(q|\bs y)$, evaluated by quadrature, is shown in black. The conditional distribution $\pi(q|\bs \theta^\ast, \bs y)$ is in red, and conditional distributions for $\bs \theta_1$ and $\bs \theta_3$ in light and dark green, corresponding to the points on the left. The true value of $q$ is marked by a vertical dashed line.}
  \label{fig:quadrature}
\end{figure}
The left panel of \cref{fig:quadrature} shows the hyperparameter marginal $\pi(\bs\theta|\bs y)$ obtained from data $\bs y$.
It is centered near the true noise standard deviation $\sigma= 0.01$, so the noise level is well recovered from the data. The curved shape of the distribution suggests an inversely proportional relationship between $\gamma$ and $\delta$, i.e., that the coefficient of the Laplacian in the prior precision, $\gamma \delta$, is easier to infer than the values of $\gamma$ and $\delta$ separately. The quantity of interest distributions on the right suggest that $\pi(q|\bs \theta, \bs y)$ shows some variation when conditioned on different values of $\bs \theta$ sampled from $\pi(\bs \theta|\bs y)$, and that the marginalized distribution differs visibly, though modestly, from the empirical Bayes estimate $\pi(q|\bs \theta^\ast, \bs y)$. All distributions, conditional and marginal, have a substantial variance reflecting low confidence in the value of the quantity of interest, a result of the problem's ill-posedness.

\subsection{Three-dimensional domain}
Next, to study the scalability of our methods, we apply them to the inverse problem governed by \eqref{eq:adv-diff} over a three-dimensional spatial domain. 
The domain $\Omega$ is a cube with three cutouts representing buildings in a very simple model of a city, shown in \cref{fig:forward3D}. The velocity field is obtained by solving the Navier-Stokes equations with no-slip boundary conditions on the building surfaces, free slip boundary conditions on the top, bottom, front, and back faces of the cube, and diagonal $(0,1,1)$ and $(0,-1,-1)$ velocities on the left and right faces, respectively. The diffusion coefficient is $\kappa = 3 \times 10^{-3}$, and the measurements are spaced in time from 2.4 to 4 time units, as in the 2D problem. In space, there are 51 observation points spread across the buildings: at the center of the top face, at the top four corners, halfway down the side edges, and in the center of the upper and lower halves of each side face for each building. Thus, the observation data is $\bs y \in \R^{255}$. The domain is discretized using 34,916 tetrahedra. Using $P_1$ elements for the initial condition $\bs m$ results in 7,480 unknowns, and using quadratic elements for $u$ in 53,558 spatial unknowns. Again, we solve \eqref{eq:adv-diff} using the backward Euler method with 80 time steps.
The initial condition used to generate synthetic data is
\begin{equation*}
    m(x_1, x_2, x_3) = \min(0.5, \exp(-50 ((x_1-0.15)^2 + (x_2-0.85)^2 + (x_3-0.7)^2))),
\end{equation*}
with noise standard deviation $\sigma = 0.01$. The initial condition and the forward solutions at two times are shown in \cref{fig:forward3D}.

\begin{figure}[t]
\centering
\begin{tikzpicture}
\node at (-2,3.6) {$t=0$};
\node at (2.7,3.6) {$t=2.4$};
\node at (7.4,3.6) {$t=4$};
\node at (-2,1) {\includegraphics[width=0.27\linewidth]{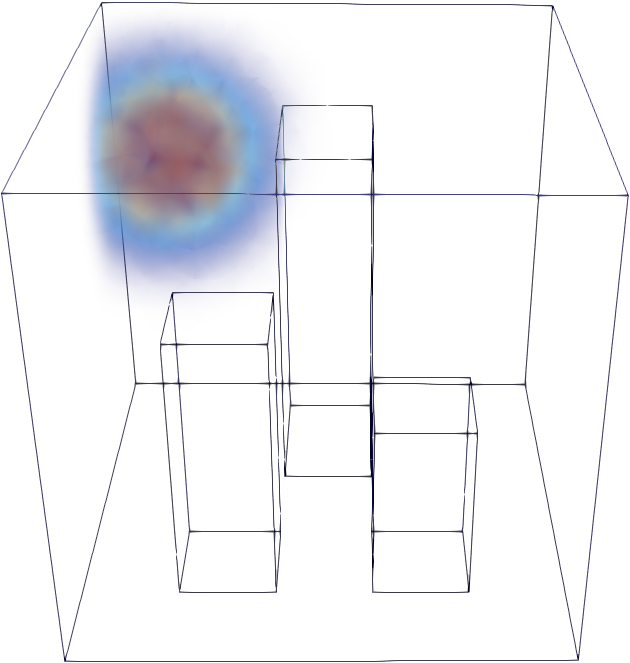}};
\node at (2.7,1) {\includegraphics[width=0.27\linewidth]{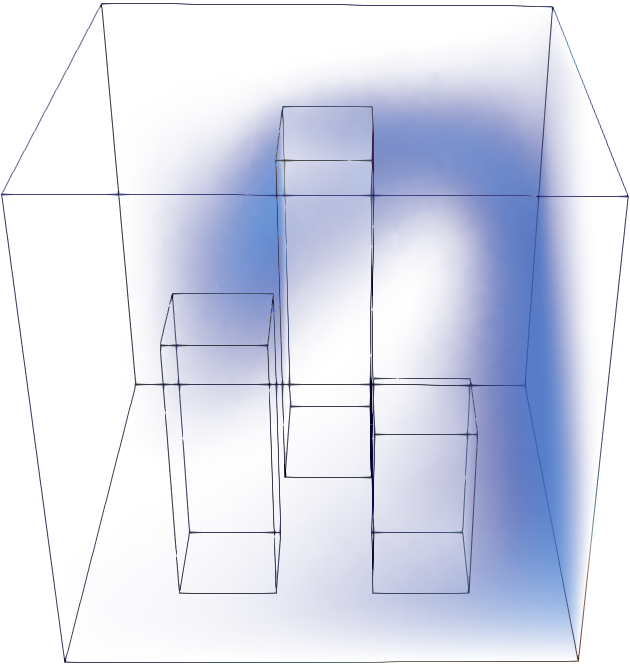}};
\node at (7.4,1) {\includegraphics[width=0.27\linewidth]{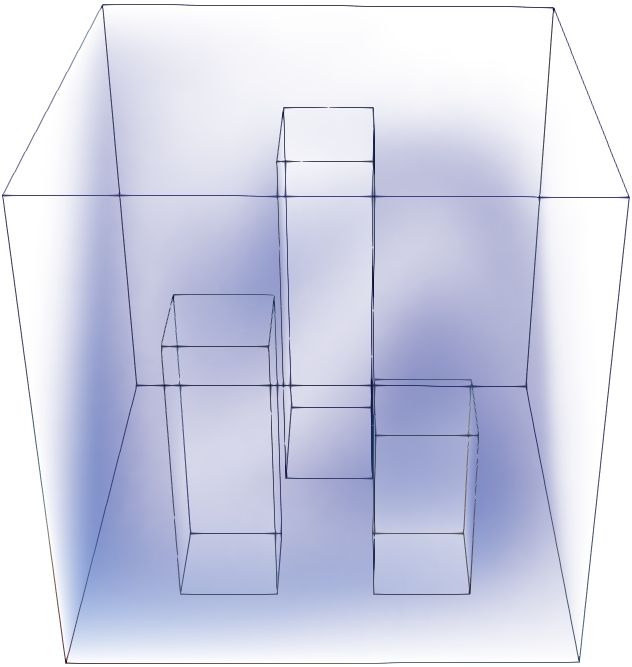}};
\node at (10.3,1) {\includegraphics[width=0.07\linewidth]{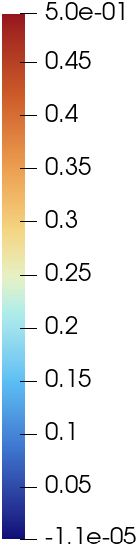}};
\end{tikzpicture}
\label{fig:forward3D}
\caption{The forward solution of the advection diffusion equation in 3D, with the initial condition at the left, the first measurement time $t=2.4$ in the center, and the final time at right.}
\end{figure}

\begin{figure}[t]
\begin{minipage}{0.60\textwidth}
    \centering
    \begin{tikzpicture}
\begin{axis}[
    height=6cm, width=8.7cm,
    xmin=0,
    xmax=250,
    ymin=1e-9,
    ymax=2000,
    compat=1.3,
    xlabel = {$i$},
    ylabel = {$\lambda_i$},
    ymode=log,
    legend pos= north east,
    legend cell align=left,
    legend style={font=\small}
]
    \addplot[color=blue, very thick, mark = none, name path global = unprecon]
	table[x expr={\thisrow{r}},y expr={\thisrow{unprecon}}] {data/spectra_3DAD.txt};
     \addplot[color=orange, very thick, mark = none, name path global = weakest]
	table[x expr={\thisrow{r}},y expr={\thisrow{weakest}}] {data/spectra_3DAD.txt};
    \addplot[color=C1, very thick, mark = none, name path global = prior1]
	table[x expr={\thisrow{r}},y expr={\thisrow{prior1}}] {data/spectra_3DAD.txt};
    \addplot[color=C2, very thick, mark = none, name path global = prior2]
	table[x expr={\thisrow{r}},y expr={\thisrow{prior2}}] {data/spectra_3DAD.txt};
    \addplot[color=C3, very thick, mark = none, name path global = prior3]
	table[x expr={\thisrow{r}},y expr={\thisrow{prior3}}] {data/spectra_3DAD.txt};
    \draw [dashed, C1, name path = cutoff_1] (axis cs:1,3.6e-3) -- (axis cs:250,3.6e-3);
    \draw [dashed, C2, name path = cutoff_2] (axis cs:1,9e-4) -- (axis cs:250,9e-4);
    \draw [dashed, C3, name path = cutoff_3] (axis cs:1,2.25e-4) -- (axis cs:250,2.25e-4);
    \draw [dashed, black, name path = cutoff_min] (axis cs:1,9e-8) -- (axis cs:250,9e-8);
    \path [name intersections={of=unprecon and cutoff_min, by=unpre_min}];
    \path [name intersections={of=unprecon and cutoff_1, by=unpre_1}];
    \path [name intersections={of=unprecon and cutoff_2, by=unpre_2}];
    \path [name intersections={of=unprecon and cutoff_3, by=unpre_3}];
    \draw[dashed] (unpre_min) -- (unpre_min |- current axis.south);
    \draw[dashed, C1] (unpre_1) -- (unpre_1 |- current axis.south);
    \draw[dashed, C2] (unpre_2) -- (unpre_2 |- current axis.south);
    \draw[dashed, C3] (unpre_3) -- (unpre_3 |- current axis.south);
    \path [name intersections={of=weakest and cutoff_min, by=weak_min}];
    \path [name intersections={of=weakest and cutoff_1, by=weak_1}];
    \path [name intersections={of=weakest and cutoff_2, by=weak_2}];
    \path [name intersections={of=weakest and cutoff_3, by=weak_3}];
    \draw[dashed] (weak_min) -- (weak_min |- current axis.south);
    \draw[dashed, C1] (weak_1) -- (weak_1 |- current axis.south);
    \draw[dashed, C2] (weak_2) -- (weak_2 |- current axis.south);
    \draw[dashed, C3] (weak_3) -- (weak_3 |- current axis.south);
    \path [name intersections={of=prior1 and cutoff_1, by=pr1_1}];
    \path [name intersections={of=prior2 and cutoff_2, by=pr2_2}];
    \path [name intersections={of=prior3 and cutoff_3, by=pr3_3}];
    \draw[dashed, C1] (pr1_1) -- (pr1_1 |- current axis.south);
    \draw[dashed, C2] (pr2_2) -- (pr2_2 |- current axis.south);
    \draw[dashed, C3] (pr3_3) -- (pr3_3 |- current axis.south);
    \legend{UP, WP, {PP, $\bs \theta_1$}, {PP, $\bs \theta_2$}, {PP, $\bs \theta_3$}};
    \end{axis}
  \end{tikzpicture}
  \end{minipage}\hfill
\begin{minipage}{0.39\textwidth}
    \centering
    \includegraphics[width=\textwidth] {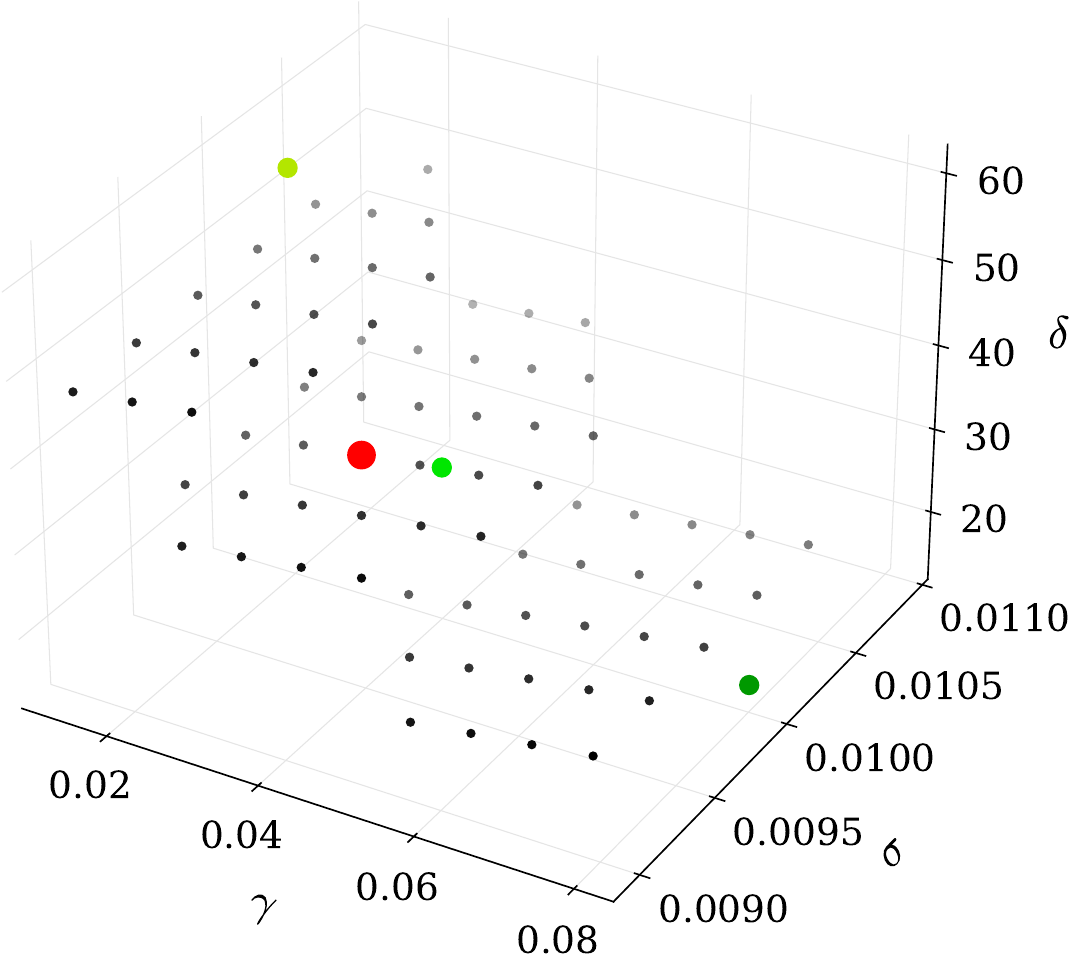}
  \end{minipage}
  \label{fig:spectra_quad_3D}
  \caption{Left: Comparison of the precision update spectra in the 3D advection-diffusion problem, for  UP (blue), WP (orange), and PP with representative values $\bs \theta_1 = [0.02, 60, 0.01]^\top$, $\bs \theta_2 = [0.04, 30, 0.01]^\top$, and $\bs \theta_3 = [0.08, 15, 0.01]^\top$ (light, medium, and dark green). The dashed lines indicate cutoffs defined as in \cref{fig:spectra_error}.
  Right: Quadrature points for $\pi(\bs \theta|\bs y)$, computed using steps of 0.8 standard deviations and a probability threshold of three standard deviations. $\bs \theta_1, \bs \theta_2$, and $\bs \theta_3$ are shown in light, medium, and dark green, and $\bs \theta^\ast = [0.0345, 33.95, 0.00975]^\top$ is shown in red.}
\end{figure}

The spectra of the precision updates in PP, WP, and UP are pictured on the left in \cref{fig:spectra_quad_3D}. The hyperpriors are the same as in the 2D problem, except for $\gamma_\text{min}$ increased to 0.01. Note that the decay of the eigenvalues is significantly slower than in the 2D problem, with ranks (listed in \cref{tab:timing3D}) nearly twice as large as those in \cref{tab:timing}. The representative values of $\bs \theta$ for the spectra are chosen from around the high probability region of $\pi(\bs \theta|\bs y)$, approximated by the quadrature points shown at right in \cref{fig:spectra_quad_3D}. 
%
Unlike \cref{fig:quadrature} (left), \cref{fig:spectra_quad_3D} (right) does not show contours of $\pi(\bs \theta|\bs y)$, 
since evaluating the distribution on a grid becomes infeasible for the more expensive 3D problem. 

\cref{tab:timing3D} displays the ranks from \cref{fig:spectra_quad_3D} and the time to compute $N=100$ evaluations of $\pi(\bs \theta|\bs y)$. Like in the 2D problem, the approximations are computed to the minimum cutoff ranks for WP and UP and to a lower rank for PP (here 80), then truncated to the $\bs \theta$-dependent rank for each evaluation. Due to the larger $\gamma_\text{min}$ compared to that in the 2D example, the difference in $\bs \theta$-dependent rank is greater than in 2D: 101 vs 144, compared to 56 vs 69. This leads to the larger difference in timing between WP and UP than in the 2D example. The difference is less pronounced when no CG is used, since then the time is almost entirely spent on the initial precomputation, which uses the more similar $\bs \theta$-independent ranks of 184 and 220.
Both methods are several times faster relative to PP than in the 2D case. 
The cost of forward and adjoint solves increases faster than that of prior solves, applications, and orthogonalization, since the advection-diffusion equation is solved using a quadratic finite element discretization. For example, while 53\% of the time for 100 WP evaluations with CG in 2D is spent performing forward and adjoint solves, the proportion is 89\% for the same computation in 3D.
Thus for more realistic, computationally intensive problems, the advantage of WP and UP over PP grows.

\begin{table}[t]
    \caption{Timing and rank comparison for PP, WP, UP in the 3D advection-diffusion problem. The first row shows time in seconds for $N=100$ evaluations of $\pi(\bs \theta|\bs y)$, and the second row shows the same time if no CG is used in the computation of $\mu_\textup{post}$. The third row shows the minimum $i$ such that $\lambda_i < 10^{-2} \sigma_\textup{min}^2 \delta_\textup{min}^2$. No value is displayed for PP, since $\bs \theta$ is known and the minimum cutoff is not used. The fourth row repeats this for $\lambda_i < 10^{-2} \sigma^2 \delta^2$, using $\bs \theta_3$ as a representative value. The asterisk indicates that the timing had to be extrapolated from $N=5$ (since PP is an entirely online method, every evaluation has a similar cost).}
    \centering
    \begin{tabular}{c|c|c|c}
         & PP & WP & UP \\
         \hline
         time for $N=100$ & $65.63^\ast$ hrs & 2.02 hrs & 3.00 hrs \\
         \hline 
         time for $N=100$, no CG & $56.38^\ast$ hrs & 1.26 hrs & 1.58 hrs \\
        \hline \hline
        rank ($\sigma^2_\text{min}\delta^2_\text{min}$) & --- & 184 & 220 \\
        \hline
        rank ($\sigma^2\delta^2$) & 66 & 101 & 144 \\
        \hline
    \end{tabular}
    \label{tab:timing3D}
\end{table}

\section{Conclusions and discussion} \label{sec:conclusion}
Hierarchical Bayesian inverse problems sit between strictly
linear problems and computationally prohibitive nonlinear inverse problems that require MCMC. Even when the parameter-to-observable map is
linear, rendering the problem conditionally Gaussian, repeated
evaluation of the marginal posteriors poses computational challenges,
and prior work has focused on linear hyperparameters to simplify
precomputation. Our proposed WP and UP methods extend
amortization-by-approximation methods to a broader class of common
hyperparameters, such as correlation lengths, that enter into the
prior nonlinearly. We demonstrate that sacrificing basis optimality in
the low-rank approximation to achieve a $\bs \theta$-independent
approximation is a highly favorable trade-off when the forward
problem is expensive to solve and many evaluations of $\pi(\bs \theta|\bs y)$ are
required.

A natural extension of this work is to accelerate optimization over
$\bs \theta$ by deriving WP and UP approximations of the gradient of
$\pi(\bs \theta|\bs y)$, alongside rigorous error bounds for these
gradient approximations. Additionally, further theoretical analysis of
the truncation error $e_2$ could also clarify which specific problem
classes can bypass the additional cost of CG solves.

A broader extension would address problems where the
parameter-to-observable map $\bs A$ inherently depends on
hyperparameters. Currently, our assumption of a $\bs
\theta$-independent $\bs A$ precludes making the diffusion coefficient or advection velocity in
\cref{sec:adv_diff} hyperparameters. Accommodating $\bs A(\bs \theta)$ would require a
means of efficiently updating the low-rank approximation under
parameter perturbations, perhaps by integrating our framework with
modern surrogate modeling techniques.

Finally, for inverse problems with moderately nonlinear $\bs A(\bs
\theta)$, linearization and Laplace approximations might be applied
following a model similar to INLA, bridging the gap between our
amortized generalized preconditioning approach and existing techniques
for non-Gaussian models. Rigorous analysis quantifying the propagation
of linearization errors through the low-rank updates will be a
critical subject for further investigation.

\section*{Acknowledgments} The authors would like to thank Lisa Gaedke-Merzh\"auser for insightful discussions about INLA, and Gonzalo G.\ de Diego for helpful discussions on the implementation of the numerical examples.

\bibliographystyle{siamplain}
\bibliography{refs}

\end{document}